\newtheorem{theorem}{Theorem}
\newtheorem{lemma}[theorem]{Lemma}
\newtheorem{corollary}[theorem]{Corollary}
\newtheorem{remark}[theorem]{Remark}
\theoremstyle{definition}
\numberwithin{theorem}{section}
\numberwithin{equation}{section}
\newcommand{\ddiv}{\operatorname{div}}
\newcommand{\tri}{\mathcal{T}}
\newcommand{\R}{\mathbb{R}}
\newcommand{\Ker}{\mathrm{Ker}}
\newcommand{\cor}{\mathcal{C}}
\newcommand{\energy}{\mathcal{E}}
\newcommand{\dx}{\,dx}
\newcommand{\eps}{\varepsilon}
\newcommand{\VC}{\tilde{V}_H}
\newcommand{\VH}{V_H}
\newcommand{\Vh}{V_h}
\newcommand{\vC}{\tilde{v}_H}
\newcommand{\uC}{\tilde{u}_H}
\newcommand{\vH}{v_H}
\newcommand{\vh}{v_h}
\begin{document}

\author{Roland Maier\thanks{Institut f\"ur Mathematik,
            Universit\"at Augsburg, 
            Universit\"atsstra{\ss}e 14, D-86159 Augsburg, Germany;
            roland.maier@math.uni-augsburg.de, 
            daniel.peterseim@math.uni-augsburg.de
             }\;
         \and Daniel Peterseim\footnotemark[2]
} 
\title{Explicit Computational Wave Propagation in Micro-Heterogeneous Media\thanks{This is a 
		pre-print of an article published in BIT Numerical Mathematics. The final authenticated version is
		available online at: \url{https://doi.org/10.1007/s10543-018-0735-8}.
		 The authors acknowledge support by Deutsche Forschungsgemeinschaft in the Priority Program 1748 {\it Reliable simulation techniques in solid mechanics} (PE2143/2-2) and thank the Hausdorff Institute for Mathematics in Bonn for the kind hospitality 
 during the trimester program on multiscale problems in 2017.}}
 \date{\today}
 \maketitle
 
\begin{abstract}
\noindent
Explicit time stepping schemes are popular for linear acoustic and elastic wave propagation due to their simple nature which does not require sophisticated solvers for the inversion of the
stiffness matrices. However, explicit schemes are only stable if the time step size is bounded by the mesh size in space subject to the so-called CFL condition. In micro-heterogeneous media, this condition is typically prohibitively restrictive because spatial oscillations of the medium need to be resolved by the discretization in space. This paper presents a way to reduce the spatial complexity in such a setting and, hence, to enable a relaxation of the CFL condition. This is done using the Localized Orthogonal Decomposition method as a tool for numerical homogenization. A complete convergence analysis is presented with appropriate, weak regularity assumptions on the initial data.
\end{abstract}
 
\noindent
{\small\textbf{Keywords} explicit time stepping, hyperbolic equation, heterogeneous media, numerical homogenization, multiscale method
}

\noindent
{\small\textbf{AMS subject classification}
	65M12, 
	65M60, 
	35L05  
}

\section{Introduction}\label{s:intro}

We consider the discretization of the wave equation
\begin{equation}\label{eq:strongform}
\begin{aligned}
\ddot{u} - \ddiv A \nabla u &= f &&\text{in }(0,T)\times\Omega,\\
u(0) &= u_0 \qquad&&\text{in }\Omega,\\
\dot{u}(0)&=v_0 &&\text{in }\Omega,\\
u\vert_{\Gamma} &= 0 &&\text{in }(0,T),\\
\nabla u \cdot \nu\vert_{\partial \Omega \setminus \Gamma} &= 0 &&\text{in }(0,T)
\end{aligned}
\end{equation}
on a polygonal, convex, bounded Lipschitz domain $\Omega\subseteq\R^d$, $d\in\{2,3\}$ with outer normal $\nu$ and Dirichlet boundary $\Gamma \subseteq \partial \Omega$ with non-zero measure. Further, we assume that the initial data $u_0 \in H^1_{\Gamma}(\Omega)$, $v_0 \in L^2(\Omega)$ and the time-independent rough coefficient $A \in L^\infty(\Omega;\R^{d \times d}_{\mathrm{sym}})$ fulfills the bounds $\alpha |\xi|^2 \leq A(x)\xi \cdot \xi$ and $|A(x) \xi| \leq \beta |\xi|$ for all $\xi \in \R^d$ and almost all $x \in \Omega$ for some $0 < \alpha \leq \beta < \infty$. We have in mind coefficients that vary on some small scale $0 <\eps \ll 1$ but we do not need restrictive assumptions such as periodicity or scale separation. 

In order to compute a numerical approximation of problem \eqref{eq:strongform}, we first write the problem in variational form, i.e., we seek a weak solution $u \in L^2(0,T;H^1_{\Gamma}(\Omega))$ with $\dot u \in L^2(0,T;L^2(\Omega))$ and $\ddot u \in L^2(0,T;H^{-1}(\Omega))$ such that
\begin{equation}\label{eq:weak}
\langle \ddot u, v\rangle_{H^{-1}(\Omega) \times H^1(\Omega)} + a(u,v) = (f,v)_{L^2(\Omega)} 
\end{equation}
\noindent for all $v\in H^1_{\Gamma}(\Omega)$ with initial conditions $u(0) = u_0$ and $\dot u(0) = v_0$, where $a$ denotes the bilinear form $a(u,v) \coloneqq \int_\Omega A\nabla u \cdot \nabla v \dx$. 
Note that for any $u_0 \in L^2(\Omega)$, $v_0 \in H^1_{\Gamma}(\Omega)$ and $f \in L^2(0,T;L^2(\Omega))$, there exists a unique weak solution $u$ of \eqref{eq:weak}.
	\noindent A proof of this can, for example, be found in \cite[Ch. 7.2]{evans2010partial}.
Restricting the solution space $H^1_{\Gamma}(\Omega)$ in \eqref{eq:weak} to a finite element space $V_h$ based on a regular mesh $\tri_h$ of $\Omega$ with mesh size $h$ and applying the leapfrog scheme with step size $\Delta t$ in time leads to the following discrete problem:\\

\itshape Find $\mathbf{u_h} = (u_h^n)_{n=0,...N}$ with $u_h^n \in V_h$ such that for $n \geq 2$
\begin{equation}\label{eq:standardfem}
\Delta t^{-2} (u_h^{n+1} - 2 u_h^n + u_h^{n-1}, v_h)_{L^2(\Omega)} + a(u_h^n,v_h) = (f(n \Delta t), v_h)_{L^2(\Omega)} 
\end{equation}
\indent for all $v_h \in V_h$ and given $u_h^0$ and $u_h^1$.\\

\noindent \upshape It is well understood that such a method only leads to acceptable results if the mesh size $h$ is small enough to resolve the fine scale features in space originating from the highly varying coefficient $A$. Consider, for example, a coefficient that oscillates periodically between $\alpha$ and $\beta$ with period length $0 < \eps \ll 1$. In this case, the error of the finite element method scales at best like $(h/\eps)^s$ for some $s > 0$ that depends on the smoothness of $A$ and the domain $\Omega$. In order to obtain accuracy, at least $h <\eps$ should hold. Such an $h$, however, may be too small to allow for reasonably fast computations. It is especially very restrictive since reducing the size of $h$ directly leads to larger systems of linear equations that need to be solved in every time step. Furthermore, the fact that the above method \eqref{eq:standardfem} is explicit in time also introduces the so-called CFL condition that limits the size of the time step $\Delta t$ by the (minimal) mesh size $h_{\text{min}}$, i.e., $\Delta t \lesssim h_{\text{min}}$. It is, hence, too expensive to pose the discrete problem on meshes with small mesh sizes $h$ that resolve fine scale features. The next section introduces a way to cope with the fine scale characteristics on an arbitrarily chosen coarse scale $H$ which not only reduces the size of present linear systems but also allows larger time steps subject to a relaxed CFL condition $\Delta t \lesssim H$.

The approach is based on the so-called Localized Orthogonal Decomposition method (LOD) introduced in \cite{maalqvist2014localization} (see also \cite{P16}) and uses ideas similar to the ones presented in \cite{peterseim2016relaxing} for the wave equation in homogeneous media posed on domains with re-entrant corners. The basic idea of the method is to define low-dimensional finite element spaces that include spatial fine scale features. {The construction is based on the decomposition of the solution space $H^1_{\Gamma}(\Omega)$ into an infinite-dimensional fine scale space and its finite-dimensional $a$-orthogonal complement. The latter has improved approximation properties compared to classical finite elements and may thus be used as both trial and test space for the spatial discretization. It can also be shown that there exists a bijective transformation from the classical finite element space to this improved approximation space. Thus, a basis of the new space is constructed by modifying the classical finite element basis functions by adding the solutions of auxiliary elliptic problems (so-called corrector problems). The corrector problems may even be localized without severely effecting approximation properties, which gives the method its name. The approach has been successfully applied to time-harmonic wave propagation to eliminate the pollution effect \cite{P17,GP15,BGP17}. For the wave equation with rough coefficients, the LOD has already been used in combination with an implicit time discretization (Crank-Nicolson) in \cite{abdulle2017localized}. Therein, the need for additional regularity assumptions on the initial data is discussed, which is also crucial for the explicit time discretization in our case. Another possibility to resolve fine scale features in space is the Heterogeneous Multiscale Method (HMM) \cite{ee2003heterognous,ee2005heterogeneous}, which is for instance discussed in \cite{abdulle2011finite,EHR11} or in \cite{EHR12,AR14} in the context of wave propagation over long time. However, the HMM requires scale separation and may thus not be accurate in the general setting of this work. Another method for the numerical homogenization of the wave equation can be found in \cite{owhzhang08}. There, the idea is to use a harmonic coordinate transformation in order to obtain higher regularity of the weak solution. The main drawbacks of this approach are the necessary assumptions (so-called Cordes-type condition) that are hard to verify, and the approximation of the coordinate transformation for which global fine scale problems need to be solved. Another approach by the same authors is presented in \cite{owhzhang14}, where so-called rough polyharmonic splines based on more demanding biharmonic corrector problems are introduced. A more recent approach \cite{owhzhang17} is based on a decomposition into orthogonal spaces in the spirit of the LOD method and shows the possible generalization of the present approach to a multilevel setting. 

In general, any of the methods mentioned above can be used for the spatial discretization. The advantage of the LOD method is that it preserves the finite element structure of the problem and it is thus very convenient for practical applications. The use of an explicit time stepping scheme on the other side is motivated by its simple nature that allows for faster computations in every time step and by the fact that the discrete energy is conserved (see \eqref{eq:energycons}). Since solutions to the wave equation conserve energy in the continuous setting, such a property is very natural and desirable in the discrete setting as well.

The paper is structured as follows. In Section~\ref{s:numups}, we introduce an idealized method based on the LOD method for the spatial discretization and the leapfrog scheme in time. We show stability and error estimates under suitable regularity assumptions and discuss a simplification of the method. Section~\ref{s:practasp} is devoted to a complete analysis of the fully discrete practical method, where also the auxiliary corrector problems are discretized in order to allow for practical computations. In Section~\ref{s:numres}, numerical experiments are presented to illustrate the numerical performance of the method and we give a short conclusion in Section~\ref{s:concl}.

In the remaining part of this paper we use the notations $(\bullet,\bullet) \coloneqq (\bullet,\bullet)_{L^2(\Omega)}$ and $\|\bullet\| \coloneqq \|\bullet\|_{L^2(\Omega)}$ for the standard $L^2$ scalar product and the corresponding norm. We denote with $V \coloneqq H^1_{\Gamma}(\Omega)$ the space $H^1(\Omega)$ with zero traces on $\Gamma$ and write $a \lesssim b$ if $a \leq C b$ with a generic constant $C$ that can depend on the exact solution $u$ and its higher order time derivatives at time zero as well as the right-hand side $f$, in order to shorten the notation. Further, $\lesssim_T$ indicates linear dependence of the constant $C$ on $T$.

\section{The Idealized Method}\label{s:numups}
As mentioned above, the aim of this section is to discretize problem \eqref{eq:weak} on a coarse mesh with mesh size $H$ that does not resolve fine scale characteristics of the coefficient. The discrete solution should still achieve reasonably good accuracy. The general idea of the LOD is to `correct' coarse finite element functions in such a way that they incorporate fine scale features of the given problem. The following subsection focuses on the spatial discretization and some useful properties. In subsection~\ref{ss:timedisc}, we then introduce an idealized method
and discuss its properties in the remaining subsections of this section.

\subsection{Numerical upscaling by LOD}\label{ss:numups}
We consider a quasi-uniform shape regular mesh $\tri_H$ on $\Omega$ with mesh size $H > \eps$. The corresponding standard $P_1$/$Q_1$ finite element space is denoted by $\VH$. The construction of the modified finite element space is based on a projective quasi-interpolation operator $I_H\colon V\to \VH$ with approximation and stability properties, i.e.,
\begin{align}\label{e:IHapproxstab}
\|H^{-1}(v-I_H v)\| + \|\nabla I_H v\| \leq C_{I_H} \|\nabla v\|
\end{align}
for all $v\in V$ and 
\begin{align}\label{e:IHL2stab}
\|I_H v\| \leq C_{I_H} \|v\|
\end{align}
for all $v\in V$. The constant $C_{I_H}$ only depends on the shape regularity of the elements in the mesh but not on $H$. Ideally, such an operator is also local in the sense that the support of the interpolation is only marginally larger than the support of the original function. This is, for instance, the case with the following possible choice which is used for our numerical experiments, see \cite{oswald1993bpx,brenner1994two,ern2017finite,KPY17}. 

We define $I_H := E_H \circ \Pi_H$, where $\Pi_H$ is the piecewise $L^2$ projection onto $P_1(\tri_H)$/ $Q_1(\tri_H)$, the space of piecewise linear/bilinear and possibly discontinuous functions that vanish at the boundary. $E_H$ denotes the averaging operator that maps $P_1(\tri_H)$/ $Q_1(\tri_H)$ to $\VH$ by assigning to each free vertex the arithmetic mean of the corresponding function values of the neighboring elements. Rigorously, for any $v\in P_1(\tri_H)$/$v\in Q_1(\tri_H)$ and a free vertex $z$ of $\tri_H$, we have 
\begin{equation*}
(E_H(v))(z) = \sum_{T\in\tri_H\atop\text{with } z \in T} v\vert_T(z)\cdot\mathrm{card}^{-1}\{K \in \tri_H : z \in K\}.
\end{equation*}
With such an interpolation operator $I_H$ we can define the so-called \emph{fine scale space} $W$ as its kernel, i.e., $W \coloneqq \Ker I_H$. We can then define for $v \in V$ the corrector $\cor v \in W$ by 
\begin{equation}\label{e:corprob}
a(\cor v,w) = a(v,w)
\end{equation}
for all $w \in W$. Note that for any $v \in V$, it holds that
\begin{align*}
\alpha \|\nabla \cor v\|^2 \leq a(\cor v,\cor v) = a(v,\cor v) \leq \beta \|\nabla v\|\|\nabla\cor v\|,
\end{align*}
and thus
\begin{equation*}\label{eq:stabcor}
\|\nabla\cor v \| \leq \frac{\beta}{\alpha} \|\nabla v\|.
\end{equation*}
Similarly, we also obtain the estimate
\begin{equation*}\label{eq:stabcor2}
\|\nabla\cor v \| \leq \alpha^{-1} C_{I_H} H \|\ddiv A \nabla v\|
\end{equation*}
if $\ddiv A \nabla v \in L^2(\Omega)$ using the approximation property \eqref{e:IHapproxstab}. Define $\VC \coloneqq (1-\cor)\VH$ and observe that $V = \VC \oplus W$ and $a(\VC,W) = 0$ by construction. Further, observe that the inverse inequality holds.
\begin{lemma}[Inverse inequality]
	For any $\vC \in \VC$, 
	\begin{equation}\label{eq:invineq2}
	\|\nabla\vC\| \leq \tilde C_{\mathrm{inv}} H^{-1}\|\vC\|.
	\end{equation}
\end{lemma}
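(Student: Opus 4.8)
The plan is to write any $\vC \in \VC$ as $\vC = (1-\cor)\vH$ for some coarse function $\vH \in \VH$, reduce the gradient bound to the classical inverse inequality on the conforming space $\VH$, and then transport the resulting $L^2$ bound from $\vH$ back to $\vC$ by means of the interpolation operator $I_H$.

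First I would control $\|\nabla\vC\|$ in terms of $\|\nabla\vH\|$. Since $\vC = \vH - \cor\vH$, the triangle inequality together with the corrector stability estimate $\|\nabla\cor\vH\| \leq \tfrac{\beta}{\alpha}\|\nabla\vH\|$ derived above yields $\|\nabla\vC\| \leq (1+\tfrac{\beta}{\alpha})\|\nabla\vH\|$. This is the only place where the coefficient bounds $\alpha,\beta$ enter the argument. Next I would invoke the standard inverse inequality for the $P_1/Q_1$ finite element space on the quasi-uniform, shape-regular mesh $\tri_H$, namely $\|\nabla\vH\| \leq C_{\mathrm{inv}}H^{-1}\|\vH\|$, whose constant depends only on the shape regularity of the elements. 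It is worth stressing that this classical estimate applies to $\vH$ precisely because $\vH$ is a genuine coarse finite element function; it is emphatically \emph{not} available for $\vC$ directly, since $\vC$ carries fine-scale features and lies only in $V$.

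The step that ties the two scales together is recovering $\|\vH\|$ from $\|\vC\|$. Here I would use that $I_H$ is projective, so $I_H\vH = \vH$, and that $\cor\vH \in W = \Ker I_H$; consequently $I_H\vC = I_H\vH - I_H\cor\vH = \vH$. The $L^2$-stability \eqref{e:IHL2stab} of $I_H$ then gives $\|\vH\| = \|I_H\vC\| \leq C_{I_H}\|\vC\|$. Chaining the three estimates produces the claim with $\tilde C_{\mathrm{inv}} = (1+\tfrac{\beta}{\alpha})\,C_{\mathrm{inv}}\,C_{I_H}$, a constant independent of $H$.

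The main subtlety, and the reason the statement is not entirely trivial, is exactly this last maneuver. Because $\vC$ need not be a finite element function, no inverse estimate can be applied to it directly; the identity $I_H\vC = \vH$, resting on the projectivity of $I_H$ and on $W = \Ker I_H$, is what permits the passage from the corrected function back to its coarse preimage, after which the $L^2$-stability of $I_H$ closes the loop. I expect all remaining computations to be routine applications of the triangle inequality and the cited bounds.
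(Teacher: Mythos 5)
Your proof is correct, and it reaches the same constant structure by a slightly different route than the paper. The paper argues in a single coercivity chain: writing $\vC = (1-\cor)I_H\vC$, it exploits the $a$-orthogonality $a(\VC,W)=0$ (note $\vC - I_H\vC = -\cor I_H\vC \in W$) to get $\alpha\|\nabla\vC\|^2 \le a(\vC,\vC) = a(\vC,I_H\vC) \le \beta\|\nabla\vC\|\,\|\nabla I_H\vC\|$, and then applies the classical inverse inequality and the $L^2$-stability of $I_H$ to the coarse function $I_H\vC$, obtaining $\tilde C_{\mathrm{inv}} = C_{\mathrm{inv}}C_{I_H}\beta/\alpha$. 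You instead split $\vC = \vH - \cor\vH$, control $\|\nabla\cor\vH\|$ by the previously derived corrector stability bound $\|\nabla\cor\vH\| \le (\beta/\alpha)\|\nabla\vH\|$ via the triangle inequality, and only then pass to the coarse inverse inequality; the identity $I_H\vC = \vH$ --- which you correctly isolate as the crux, and which is exactly what the paper encodes by writing $\vC = (1-\cor)I_H\vC$ --- closes the loop in both arguments. What the paper's route buys is a marginally sharper constant, $\beta/\alpha$ in place of your $1+\beta/\alpha$ (immaterial, since $\beta/\alpha \ge 1$ makes the two agree up to a factor of $2$), and it puts on display the $a$-orthogonality of the decomposition $V = \VC \oplus W$, the structural property driving the rest of the LOD analysis. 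What your route buys is modularity: each step is a cited estimate (corrector stability, classical inverse inequality, $L^2$-stability of $I_H$) glued by the triangle inequality, with no further use of the bilinear form needed.
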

\begin{proof}
	Let $\vC \in \VC$. Since $\vC = (1-\cor)I_H \vC$, we get
	\begin{align*}
	\begin{aligned}
	\alpha \| \nabla \vC\|^2 &\leq a(\vC,\vC) = a(\vC,I_H\vC) \leq \beta \|\nabla \vC\|\|\nabla I_H  \vC\| \\
	&\leq \beta \|\nabla \vC\| C_{\mathrm{inv}} C_{I_H} H^{-1} \|\vC\|
	\end{aligned}
	\end{align*}
	using \eqref{e:IHL2stab} and the standard inverse inequality
	\begin{equation}\label{eq:invineq1}
	\|\nabla\vH\| \leq C_{\mathrm{inv}} H^{-1}\|\vH\|
	\end{equation}
	for any $\vH \in \VH$. Hence, \eqref{eq:invineq2} follows with $\tilde C_\mathrm{inv} \coloneqq C_{\mathrm{inv}} C_{I_H} \beta/\alpha$.
\end{proof}
\noindent Besides, the new space $\VC$ also has the following approximation property, which is a generalization of \cite[Lemma 2.1]{peterseim2016relaxing} to the case of non-constant coefficients.
\begin{lemma}
	For all $u \in V$ with $\ddiv A \nabla u \in L^2(\Omega)$, it holds that
	\begin{align}\label{e:approxVC}
	\inf_{\vC\in \VC} \|u - \vC\|_{H^1(\Omega)} \leq \alpha^{-1}C_{I_H} H\|\ddiv A \nabla u\|
	\end{align}
\end{lemma}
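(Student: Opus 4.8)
The plan is to avoid working with the infimum directly and instead exhibit an explicit, near-optimal competitor in $\VC$ together with a sharp residual bound. The natural choice is the image of the Ritz (i.e.\ $a$-orthogonal) projection onto $\VC$, namely $\uC \coloneqq (1-\cor)I_H u$, which lies in $\VC$ because $I_H u \in \VH$. Setting $e \coloneqq u - \uC$, the first task is to show that the error lies entirely in the fine-scale space, $e \in W$. Since $I_H$ is projective ($I_H|_{\VH}=\mathrm{id}$) and $\cor\VH \subseteq W = \Ker I_H$, one computes $I_H\uC = I_H(1-\cor)I_H u = I_H u$, whence $I_H e = I_H u - I_H u = 0$.

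The second step uses the $a$-orthogonality that is hardwired into the construction. Because $\uC \in \VC$ and $e \in W$, the identity $a(\VC,W)=0$ gives $a(\uC,e)=0$, so coercivity yields
\begin{equation*}
\alpha\|\nabla e\|^2 \le a(e,e) = a(u,e) - a(\uC,e) = a(u,e).
\end{equation*}
At this point I would integrate by parts, which is legitimate precisely because $\ddiv A\nabla u \in L^2(\Omega)$. The boundary terms disappear: on $\Gamma$ one has $e=0$ since $e\in V=H^1_\Gamma(\Omega)$, while on $\partial\Omega\setminus\Gamma$ the natural condition $A\nabla u\cdot\nu=0$ holds. This leaves
\begin{equation*}
a(u,e) = -(\ddiv A\nabla u,\,e) \le \|\ddiv A\nabla u\|\,\|e\|.
\end{equation*}

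The third step trades the $L^2$-factor $\|e\|$ for a gradient factor through a Poincar\'e-type inequality on the kernel. As $e\in W$ means $I_H e = 0$, the approximation property \eqref{e:IHapproxstab} applied to $e$ gives $\|e\| = \|e - I_H e\| \le C_{I_H}H\,\|\nabla e\|$. Substituting this and cancelling one power of $\|\nabla e\|$ produces the core estimate
\begin{equation*}
\|\nabla e\| \le \alpha^{-1}C_{I_H}H\,\|\ddiv A\nabla u\|.
\end{equation*}
Since $\uC\in\VC$ this bounds $\inf_{\vC\in\VC}\|u-\vC\|$ from above, and the full $H^1$-estimate \eqref{e:approxVC} follows because on $V=H^1_\Gamma(\Omega)$ the gradient norm is equivalent to the $H^1$-norm ($\Gamma$ having positive measure), the $L^2$-part of $e$ being itself controlled by $\|\nabla e\|$ via the same kernel inequality.

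I expect the only genuinely delicate point to be the integration by parts: one must confirm that the hypothesis $\ddiv A\nabla u\in L^2(\Omega)$ indeed justifies the identity $a(u,e)=-(\ddiv A\nabla u,e)$ for \emph{every} admissible test function $e\in V$, and that the mixed Dirichlet/Neumann boundary contributions cancel. The remaining ingredients — the identification $e\in W$, the exploitation of $a(\VC,W)=0$, coercivity, and the Poincar\'e inequality on $\Ker I_H$ — are immediate consequences of the LOD construction and the stated properties of $I_H$.
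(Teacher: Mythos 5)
Your proof is correct and follows essentially the same route as the paper: the paper defines $\uC$ variationally as the $a$-orthogonal projection of $u$ onto $\VC$ (whose error automatically lies in $W$), while you exhibit it explicitly as $(1-\cor)I_H u$ and verify $I_H e = 0$ by hand --- but these are the same function by uniqueness of the splitting $V = \VC \oplus W$. The remaining steps (coercivity plus $a(\VC,W)=0$, integration by parts using $\ddiv A\nabla u \in L^2(\Omega)$, and the kernel estimate $\|e\| \le C_{I_H}H\|\nabla e\|$) coincide exactly with the paper's argument.
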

\begin{proof}
	Let $\uC \in \VC$ be the orthogonal projection with respect to the bilinear form $a$ of $u$ onto $\VC$, i.e.,
	\begin{equation*}
	a(\uC,\vC) = a(u,\vC)
	\end{equation*}
	for all $\vC \in \VC$. Therefore, the error $e_H = u - \uC \in W$ and, hence,
	\begin{equation*}
	\alpha \|\nabla e_H\|^2 \leq a(e_H,e_H) = a(u,e_H) = (-\ddiv A \nabla u, e_H) \leq \|\ddiv A \nabla u\|\|e_H\|.
	\end{equation*}
	Since $e_H \in W$, it holds that
	\begin{equation*}
	\|e_H\| = \|(1 - I_H)e_H\| \leq C_{I_H} H \|\nabla e_H\| 
	\end{equation*}
	using the approximation property \eqref{e:IHapproxstab}. Combining both inequalities results in 
	\begin{equation*}
	\|\nabla(u - \uC)\| \leq \alpha^{-1}C_{I_H} H\|\ddiv A \nabla u\|
	\end{equation*}
	which concludes the proof.
\end{proof}

\subsection{Discretization in time}\label{ss:timedisc}
\noindent Based on the adapted spatial discretization defined above and the standard leapfrog scheme in time as in \eqref{eq:standardfem}, the proposed idealized method reads:\\

\itshape Find $\mathbf{\tilde{u}_H} = (\tilde{u}_n)_{n = 0,..,N}$ with $\tilde{u}_n \in \VC$, such that for $n \geq 2$
\begin{equation}\label{eq:lod}
\begin{aligned}
&\Delta t^{-2} (\tilde{u}_{n+1} - 2 \tilde{u}_n + \tilde{u}_{n-1},\vC) + a(\tilde{u}_n,\vC) = (f(n\Delta t),\vC)
\end{aligned}
\end{equation}
\indent for all $\vC \in \VC$ and given $\tilde{u}_0 = (1-\cor)I_H u_0$ and suitable $\tilde{u}_1 \in \VC$.\upshape\\

\noindent We call \eqref{eq:lod} the \emph{idealized method} because we implicitly assume that the corrector problems \eqref{e:corprob} can be computed exactly. In order to show stability and error estimates for this scheme, standard methods can be applied \cite{christiansen2009foundations,joly2003variational}. First, we introduce the discrete energy
\begin{equation*}
\energy^{n+1/2}(\mathbf{\tilde{u}_H}) \coloneqq \frac{1}{2}(\|\dot{\tilde u}_{n+1/2}\|^2 + a(\tilde u_n,\tilde u_{n+1})),
\end{equation*}
where $\dot{\tilde u}_{n+1/2} \coloneqq \frac{\tilde u_{n+1} - \tilde u_n}{\Delta t}$ denotes the discrete time derivative. Using \eqref{eq:lod} with the test function $\vC = \tilde{u}_{n+1} - \tilde{u}_{n-1}$, we derive energy conservation in the sense that
\begin{equation}\label{eq:energycons}
\begin{aligned}
&&&\Delta t \,(f(n\Delta t ),\dot{\tilde{u}}_{n+1/2} + \dot{\tilde{u}}_{n-1/2}) \\
&&=\quad& \Delta t^{-2} (\tilde{u}_{n+1} - 2 \tilde{u}_n + \tilde{u}_{n-1},\tilde{u}_{n+1} - \tilde{u}_{n-1}) + a(\tilde{u}_n, \tilde{u}_{n+1} - \tilde{u}_{n-1})\\
&&=\quad& 2\left(\energy^{n+1/2}(\mathbf{\tilde{u}_H}) - \energy^{n-1/2}(\mathbf{\tilde{u}_H})\right).
\end{aligned}
\end{equation}
\begin{lemma}[Stability of the idealized method]\label{l:stab}
	Assume that the CFL condition
		\begin{equation}\label{eq:cfl}
		1 - \frac{1}{2}\beta C_{\mathrm{inv}}^2C_{I_H}^2 H^{-2} \Delta t^2 \geq \delta
		\end{equation}
		holds for some $\delta > 0$. Then the idealized method \eqref{eq:lod} is stable, i.e.,
	\begin{equation}\label{eq:stability}
	\hspace{-2pt}\| \dot{\tilde{u}}_{n+1/2}\| + \| \nabla \tilde{u}_{n+1}\| \leq C_s \left(\Delta t \sum_{k=1}^n \|f(k \Delta t)\| + \| \dot{\tilde{u}}_{1/2}\| + \| \nabla \tilde{u}_{0}\| + \| \nabla \tilde{u}_{1}\|\right)
	\end{equation}
	with a generic constant $C_s$.
\end{lemma}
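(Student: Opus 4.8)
The plan is to run a discrete energy argument in three stages: first show that, under the CFL condition \eqref{eq:cfl}, the discrete energy $\energy^{n+1/2}$ is \emph{coercive}, i.e.\ it controls $\|\dot{\tilde u}_{n+1/2}\|^2 + \|\nabla\tilde u_{n+1}\|^2$ from below; then feed this into the conservation identity \eqref{eq:energycons} together with a discrete Gronwall step to bound the energy by the data; and finally translate the energy bound back into the norm bound \eqref{eq:stability}. The coercivity step is where the CFL condition is consumed and is the main obstacle.

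For coercivity I would first rewrite the cross term in the energy by the polarization identity
\[
a(\tilde u_n,\tilde u_{n+1}) = \tfrac12 a(\tilde u_n,\tilde u_n) + \tfrac12 a(\tilde u_{n+1},\tilde u_{n+1}) - \tfrac12 a(\tilde u_{n+1}-\tilde u_n,\tilde u_{n+1}-\tilde u_n),
\]
and use $\tilde u_{n+1}-\tilde u_n = \Delta t\,\dot{\tilde u}_{n+1/2}$ to obtain
\[
\energy^{n+1/2} = \left(\tfrac12\|\dot{\tilde u}_{n+1/2}\|^2 - \tfrac{\Delta t^2}{4} a(\dot{\tilde u}_{n+1/2},\dot{\tilde u}_{n+1/2})\right) + \tfrac14 a(\tilde u_n,\tilde u_n) + \tfrac14 a(\tilde u_{n+1},\tilde u_{n+1}).
\]
The last two terms are nonnegative and bounded below by $\tfrac{\alpha}{4}\|\nabla\tilde u_{n+1}\|^2$. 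The delicate point is the bracketed term with its negative sign: it is controlled exactly when one has the sharp inverse-type estimate $a(\dot v,\dot v)\le \beta C_{\mathrm{inv}}^2 C_{I_H}^2 H^{-2}\|\dot v\|^2$ for every $\dot v\in\VC$. I would prove this by exploiting the $a$-orthogonality $a(\VC,W)=0$: since $\dot v=(1-\cor)I_H\dot v$ with $\cor I_H\dot v\in W$, one gets $a(\dot v,\dot v)=a(I_H\dot v,\dot v)$, whence Cauchy--Schwarz in the $a$-inner product gives $a(\dot v,\dot v)\le a(I_H\dot v,I_H\dot v)$, and the standard inverse inequality \eqref{eq:invineq1} on $\VH$ combined with the $L^2$-stability \eqref{e:IHL2stab} of $I_H$ bounds the right-hand side by $\beta C_{\mathrm{inv}}^2 C_{I_H}^2 H^{-2}\|\dot v\|^2$. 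Inserting this and invoking \eqref{eq:cfl} yields $\tfrac12\|\dot{\tilde u}_{n+1/2}\|^2 - \tfrac{\Delta t^2}{4} a(\dot{\tilde u}_{n+1/2},\dot{\tilde u}_{n+1/2}) \ge \tfrac{\delta}{2}\|\dot{\tilde u}_{n+1/2}\|^2$, and therefore $\energy^{n+1/2}\ge \tfrac{\delta}{2}\|\dot{\tilde u}_{n+1/2}\|^2 + \tfrac{\alpha}{4}\|\nabla\tilde u_{n+1}\|^2$.

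Next I would sum the conservation identity \eqref{eq:energycons} over the time steps $k=1,\dots,n$, which telescopes to $\energy^{n+1/2} = \energy^{1/2} + \tfrac{\Delta t}{2}\sum_{k=1}^n (f(k\Delta t),\dot{\tilde u}_{k+1/2}+\dot{\tilde u}_{k-1/2})$. Applying Cauchy--Schwarz together with the coercivity bound $\|\dot{\tilde u}_{j+1/2}\|\le\sqrt{2/\delta}\,\sqrt{\energy^{j+1/2}}$ to each factor, and then evaluating the telescoped identity at the index $m\le n$ maximizing $\energy^{j+1/2}$ over $0\le j\le n$, I arrive at a quadratic inequality of the form $\energy_{\max}\le \energy^{1/2} + \sqrt{2/\delta}\,(\Delta t\sum_{k=1}^n\|f(k\Delta t)\|)\sqrt{\energy_{\max}}$. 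Solving this for $\sqrt{\energy_{\max}}$ (the discrete Gronwall step, using $\sqrt{a+b}\le\sqrt a+\sqrt b$) gives $\sqrt{\energy^{n+1/2}}\lesssim \Delta t\sum_{k=1}^n\|f(k\Delta t)\| + \sqrt{\energy^{1/2}}$. I would then invoke coercivity once more to get $\|\dot{\tilde u}_{n+1/2}\|+\|\nabla\tilde u_{n+1}\|\lesssim \sqrt{\energy^{n+1/2}}$, and bound the initial energy by $\energy^{1/2}=\tfrac12(\|\dot{\tilde u}_{1/2}\|^2 + a(\tilde u_0,\tilde u_1))\lesssim \|\dot{\tilde u}_{1/2}\|^2 + \|\nabla\tilde u_0\|^2 + \|\nabla\tilde u_1\|^2$ via continuity of $a$ and Young's inequality; assembling these pieces yields \eqref{eq:stability} with a generic $C_s$ depending on $\alpha,\beta,\delta$. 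As noted, the crux is the coercivity step, and specifically getting the constant in the inverse-type bound to match precisely the one in \eqref{eq:cfl}, which hinges on combining the $a$-orthogonal splitting with the $L^2$-stability of $I_H$ rather than a cruder gradient inverse estimate.
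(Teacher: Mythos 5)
Your proposal is correct and follows essentially the same route as the paper's proof: polarization of the discrete energy, a sharp inverse-type bound on $\VC$ obtained from the $a$-orthogonality $a(\VC,W)=0$ combined with \eqref{eq:invineq1} and \eqref{e:IHL2stab} (yielding precisely the constant in \eqref{eq:cfl}), then energy conservation \eqref{eq:energycons} and telescoping. The only differences are cosmetic: the paper derives the key bound from the Pythagorean-type identity $a((1-\cor)\vH,(1-\cor)\vH)=a(\vH,\vH)-a(\cor\vH,\cor\vH)\le\beta\|\nabla\vH\|^2$ rather than your Cauchy--Schwarz argument in the $a$-inner product, and it avoids your max-index/quadratic-inequality Gronwall step by dividing the one-step energy increment by $\sqrt{\energy^{n+1/2}(\mathbf{\tilde{u}_H})}+\sqrt{\energy^{n-1/2}(\mathbf{\tilde{u}_H})}$, which gives the telescoping recursion $\sqrt{\energy^{n+1/2}(\mathbf{\tilde{u}_H})}\le\sqrt{\energy^{n-1/2}(\mathbf{\tilde{u}_H})}+\tfrac{1}{\sqrt{2\delta}}\Delta t\|f(n\Delta t)\|$ directly.
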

\begin{proof}
	The proof mainly follows the ideas presented in \cite{christiansen2009foundations,joly2003variational}, generalized to the case of arbitrary coefficients. With the inverse inequality \eqref{eq:invineq1} and the boundedness of the bilinear form $a$, i.e.,
	\begin{equation*}
	a(u,v) \leq \beta \|\nabla u\|\|\nabla v\| 
	\end{equation*}
	for any $u,v \in V$, we have
	\begin{align*}
	&\quad\,\,\energy^{n+1/2}(\mathbf{\tilde{u}_H})\\ 
	&= \frac{1}{2}\left(\|\dot{\tilde{u}}_{n+1/2}\|^2 + a(\tilde{u}_n,\tilde{u}_{n+1})\right) \\
	&= \frac{1}{4} a(\tilde{u}_{n+1},\tilde{u}_{n+1}) + \frac{1}{4} a(\tilde{u}_{n},\tilde{u}_{n})-\frac{1}{4} a(\tilde{u}_{n+1}-\tilde{u}_{n},\tilde{u}_{n+1}-\tilde{u}_{n}) + \frac{1}{2}\|\dot{\tilde{u}}_{n+1/2}\|^2\\
	&\geq \frac{1}{4} a(\tilde{u}_{n+1},\tilde{u}_{n+1}) + \frac{1}{4} a(\tilde{u}_{n},\tilde{u}_{n}) + \frac{1}{2}(1-\frac{1}{2}\beta C_{\text{inv}}^2C_{I_H}^2H^{-2}\Delta t^2)\|\dot{\tilde{u}}_{n+1/2}\|^2
	\end{align*}
	Here, we have used the fact that, for any $\vH \in \VH$,
	\begin{equation*}
	a((1-\cor)\vH,(1-\cor)\vH) = a(\vH,\vH) - a(\cor\vH,\cor\vH) \leq \beta \|\nabla\vH\|^2.
	\end{equation*}
	\noindent The CFL condition \eqref{eq:cfl} ensures positivity of the discrete energy, i.e.,
	\begin{equation}\label{eq:energybound}
	\energy^{n+1/2}(\mathbf{\tilde{u}_H}) \geq \frac{1}{4} a(\tilde{u}_{n+1},\tilde{u}_{n+1}) + \frac{1}{4} a(\tilde{u}_{n},\tilde{u}_{n}) 
	+ \frac{\delta}{2}\|\dot{\tilde{u}}_{n+1/2}\|^2.
	\end{equation}
	With \eqref{eq:energycons}, we get the estimate
	\begin{align*}
	\energy^{n+1/2}(\mathbf{\tilde{u}_H}) - \energy^{n-1/2}(\mathbf{\tilde{u}_H}) 
	&= \frac{1}{2} \Delta t (f(n \Delta t),\dot{\tilde{u}}_{n+1/2} + \dot{\tilde{u}}_{n-1/2})\\
	&\leq \frac{1}{2} \Delta t \|f(n \Delta t)\| (\|\dot{\tilde{u}}_{n+1/2}\| + \|\dot{\tilde{u}}_{n-1/2}\|)\\
	&\leq \frac{1}{\sqrt{2\delta}} \Delta t \|f(n \Delta t)\| \left(\sqrt{\energy^{n+1/2}(\mathbf{\tilde{u}_H})} + \sqrt{\energy^{n-1/2}(\mathbf{\tilde{u}_H})}\right)
	\end{align*}
	using inequality \eqref{eq:energybound}. From this, we get
	\begin{equation*}
	\sqrt{\energy^{n+1/2}(\mathbf{\tilde{u}_H})} \leq \sqrt{\energy^{n-1/2}(\mathbf{\tilde{u}_H})} + \frac{1}{\sqrt{2\delta}} \Delta t \|f(n \Delta t)\|
	\end{equation*}
	and, hence, the stability estimate
	\begin{equation*}
	\sqrt{\energy^{n+1/2}(\mathbf{\tilde{u}_H})} \leq \sqrt{\energy^{1/2}(\mathbf{\tilde{u}_H})} + \frac{1}{\sqrt{2\delta}} \Delta t \sum_{k=1}^n\|f(k \Delta t)\|.
	\end{equation*}
	This, in turn, implies
	\begin{equation*}
	\| \dot{\tilde{u}}_{n+1/2}\| + \| \nabla \tilde{u}_{n+1}\| \leq C_s \left(\Delta t \sum_{k=1}^n \|f(k \Delta t)\| + \| \dot{\tilde{u}}_{1/2}\| + \| \nabla \tilde{u}_{0}\| + \| \nabla \tilde{u}_{1}\|\right)
	\end{equation*}
	with $C_s = \text{max}\left\{\sqrt{\frac{2}{\delta}},\frac{2}{\sqrt{\alpha}}\right\} \text{max}\left\{\sqrt{\frac{2}{\delta}},\sqrt{\beta}\right\}$.
\end{proof}
\noindent As in \cite{peterseim2016relaxing}, we can use the estimate \eqref{eq:stability} to derive an estimate for the error $\tilde{u}_n-u(t_n)$ in the next subsection. Note that the constant in \eqref{eq:stability} crucially depends on the contrast $\beta/\alpha$ and thus also the constant in the error bound depends on $\beta/\alpha$. 

\subsection{Error analysis}
In order to derive an error estimate, let $z_H\in L^2(0,T;\VC)$ denote the auxiliary semi-discrete solution, i.e., $\dot{z}_H\in L^2(0,T;\VC)$,  $\ddot{z}_H\in L^2(0,T;\VC)$ and $z_H$ solves
\begin{align}\label{eq:semidiscrete}
\left(\ddot{z}_H(t) , \vC\right)_{L^{2}(\Omega)} + a(z_H(t),\vC) = (f(t),\vC)
\end{align}
for all $\vC\in \VC$ and all $t\in[0,T]$, with initial conditions $z_H(0)=(1-\cor)I_H u_0$ and $\dot{z}_H(0)=(1-\cor)I_H v_0$. 
	\begin{remark}
		With the above regularity assumptions $u_0 \in H^1_{\Gamma}(\Omega)$, $v_0 \in L^2(\Omega)$ and $f \in L^2(0,T;L^2(\Omega))$, there exists a unique solution $z_H$ of \eqref{eq:semidiscrete}. The energy norm of $z_H$ can be bounded by the norms of the initial data $u_0$, $v_0$ and the right-hand side $f$. 
		This follows from standard ODE theory due to the fact that \eqref{eq:semidiscrete} may be rewritten as a linear system of ODEs.
	\end{remark}
\noindent For the time discretization, let $N=\lceil T/\Delta t \rceil$ be the number of time steps. Similar to the estimates in \cite{peterseim2016relaxing}, the total error can be split into the discretization error $(E^n)_{n=0,\dots,N}$ in time defined by $E^n=\tilde{u}_n - z_H(n\Delta t)$, and the spatial discretization error $z_H(n\Delta t) - u(n\Delta t) = z_H(n\Delta t)-\Pi_{\VC} u(n\Delta t)-\rho(n\Delta t)$ with the best-approximation error $\rho(t)=u(t)-\Pi_{\VC} u(t)$. Here, $\Pi_{\VC} u(t)$ denotes the orthogonal projection of $u(t)$ onto $\VC$ with respect to the bilinear form $a$. Using \eqref{eq:stability}, we get the following result.
\begin{theorem}[Error of the idealized method]\label{t:error}
	If $\ddot{u}\in L^1(0,T;L^2(\Omega))$ and $\ddot{z}_H\in C(0,T;L^2(\Omega))$, it holds with $t_n=n\Delta t$ that
	\begin{equation}\label{eq:errorbound}
	\begin{aligned}
	&\left\|\frac{(\tilde{u}_{n+1}-u(t_{n+1}))
		-(\tilde{u}_{n}-u(t_{n}))}{\Delta t}\right\|
	+ \|\nabla (\tilde{u}_{n+1}-u(t_{n+1}))\|\\
	&\hspace{-2pt}\leq C_s\left(\|\dot{E}^{1/2}\| + \|\nabla E^1\|
	+ \left\|\dot{z}_H(0)-\Pi_{\VC} \dot{u}(0)\right\|
	+ \|\nabla (z_H(0)-\Pi_{\VC} u(0))\| \right.\\
	&\qquad \qquad  \;\left.
	+ \left\|\frac{\rho(t_n)-\rho(t_{n-1})}{\Delta t}\right\|
	+ \|\nabla \rho(t_n)\|
	+ \int_0^{t_n}
	\left\|\ddot{\rho}(s)\right\| \,ds
	\right.\\
	&\qquad\qquad  \;    \left.   
	+ \sum_{k=1}^n \Delta t
	\left\|\frac{z_H(t_{k+1}) - 2 z_H(t_k)
		+ z_H(t_{k-1})}{(\Delta t)^{2}}
	- \ddot{z}_H(t_k)\right\|\right),\\
	\end{aligned}
	\end{equation}
	with the constant $C_s$ from \eqref{eq:stability}.
\end{theorem}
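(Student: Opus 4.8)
The plan is to use the error splitting already prepared before the theorem. Writing $\theta(t):=z_H(t)-\Pi_{\VC} u(t)\in\VC$, so that the total error decomposes as
\[ \tilde{u}_n-u(t_n)=E^n+\theta(t_n)-\rho(t_n),\qquad E^n=\tilde{u}_n-z_H(t_n)\in\VC,\ \rho(t)=u(t)-\Pi_{\VC}u(t)\in W, \]
I would apply the triangle inequality to the left-hand side of \eqref{eq:errorbound} so that the difference-quotient and gradient contributions of $E$, $\theta$, and $\rho$ are estimated separately by three different mechanisms: the discrete stability bound \eqref{eq:stability} for $E$, a continuous energy estimate for $\theta$, and the raw best-approximation terms for $\rho$ (the last are simply kept on the right-hand side).

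First I would treat the time-discretization error $E^n$. Evaluating the semi-discrete equation \eqref{eq:semidiscrete} at $t=t_k$ and subtracting it from the fully discrete scheme \eqref{eq:lod} shows that $(E^n)_n$ solves the same leapfrog scheme as $\mathbf{\tilde{u}_H}$, but with $f(t_k)$ replaced by the consistency defect
\[ d^k:=\frac{z_H(t_{k+1})-2z_H(t_k)+z_H(t_{k-1})}{\Delta t^2}-\ddot{z}_H(t_k), \]
which is well defined by the hypothesis $\ddot{z}_H\in C(0,T;L^2(\Omega))$. Since $E^0=\tilde{u}_0-z_H(0)=0$ by the choice of initial data, Lemma~\ref{l:stab} applied to $(E^n)_n$ gives
\[ \left\|\frac{E^{n+1}-E^n}{\Delta t}\right\|+\|\nabla E^{n+1}\|\leq C_s\Big(\Delta t\sum_{k=1}^n\|d^k\|+\|\dot{E}^{1/2}\|+\|\nabla E^1\|\Big), \]
which accounts for the first two data terms and the consistency sum in \eqref{eq:errorbound}.

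The key step is the bound for $\theta$. Testing the weak form \eqref{eq:weak} with $\vC\in\VC$ and using that $\Pi_{\VC}$ is the $a$-orthogonal projection, so that $a(u,\vC)=a(\Pi_{\VC}u,\vC)$, I would write $\ddot{u}=\tfrac{d^2}{dt^2}\Pi_{\VC}u+\ddot{\rho}$ and subtract the result from \eqref{eq:semidiscrete} to obtain that $\theta$ solves $(\ddot{\theta},\vC)+a(\theta,\vC)=(\ddot{\rho},\vC)$ for all $\vC\in\VC$, with $\theta(0)=z_H(0)-\Pi_{\VC}u(0)$ and $\dot{\theta}(0)=\dot{z}_H(0)-\Pi_{\VC}\dot{u}(0)$. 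As $\theta,\dot{\theta},\ddot{\theta}\in\VC$, testing with $\vC=\dot{\theta}(t)$ yields $\tfrac{d}{dt}\big(\tfrac12\|\dot{\theta}\|^2+\tfrac12 a(\theta,\theta)\big)=(\ddot{\rho},\dot{\theta})$, and integrating with Cauchy--Schwarz together with coercivity of $a$ gives the standard energy estimate
\[ \|\dot{\theta}(t)\|+\|\nabla\theta(t)\|\lesssim\|\dot{\theta}(0)\|+\|\nabla\theta(0)\|+\int_0^t\|\ddot{\rho}(s)\|\,ds. \]
This controls $\|\nabla\theta(t_{n+1})\|$ and, via $\frac{\theta(t_{n+1})-\theta(t_n)}{\Delta t}=\frac1{\Delta t}\int_{t_n}^{t_{n+1}}\dot{\theta}(s)\,ds$, also the difference quotient of $\theta$ by $\max_s\|\dot{\theta}(s)\|$; these contribute exactly the terms $\|\dot{z}_H(0)-\Pi_{\VC}\dot{u}(0)\|$, $\|\nabla(z_H(0)-\Pi_{\VC}u(0))\|$, and $\int_0^{t_n}\|\ddot{\rho}\|$. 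Collecting finally the remaining $\rho$-contributions at the last time level (up to a harmless one-step shift of the time index) as $\|\nabla\rho(t_n)\|$ and $\big\|\frac{\rho(t_n)-\rho(t_{n-1})}{\Delta t}\big\|$, and summing the three groups, produces \eqref{eq:errorbound}.

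I expect the main obstacle to be the $\theta$-step: setting up the elliptic (Ritz) projection identity correctly, in particular commuting $\Pi_{\VC}$ with $\partial_t$ and ensuring $\ddot{\rho}\in L^2(\Omega)$ pointwise in time (this is where the regularity hypotheses on $\ddot{u}$ and on the initial data enter), and verifying that the forcing in the equation for $\theta$ is precisely $(\ddot{\rho},\cdot)$ so that the energy estimate yields the integral term in the stated norm. By comparison, the treatment of $E$ through Lemma~\ref{l:stab} and the purely algebraic triangle-inequality bookkeeping are routine.
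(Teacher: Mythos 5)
Your proposal is correct and follows essentially the same route the paper intends: the paper gives no explicit proof of Theorem~\ref{t:error}, but it prescribes exactly your three-way splitting $\tilde u_n - u(t_n) = E^n + (z_H(t_n)-\Pi_{\VC}u(t_n)) - \rho(t_n)$ in the paragraph preceding the theorem and then invokes the stability estimate \eqref{eq:stability}, which is precisely what you do --- discrete stability for the leapfrog recursion satisfied by $E^n$ with the consistency defect as forcing (using $E^0=0$), a continuous energy estimate for $\theta = z_H-\Pi_{\VC}u$ driven by $(\ddot\rho,\cdot)$, and the raw best-approximation terms for $\rho$. The only discrepancy, the one-step shift in the $\rho$-indices (your triangle inequality naturally produces $t_{n+1}$ where \eqref{eq:errorbound} displays $t_n$ and $t_{n-1}$), is an artifact of the paper's own statement rather than a gap in your argument.
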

\noindent With \eqref{e:approxVC} and under the assumption of some additional regularity and appropriate initial conditions, the right-hand side of \eqref{eq:errorbound} scales like $H + \Delta t^2$.
\begin{corollary}[Error of the idealized method]\label{c:error}
	Assume that $u\in C^3(0,T;L^2(\Omega))$, $\ddot u \in C(0,T;H^1(\Omega))$, $z_H\in C^4(0,T;L^2(\Omega))$ and $f\in C^1(0,T;L^2(\Omega))$ and that the corresponding norms are independent of the roughness of $A$, i.e., they do not grow when reducing the fine scale $\eps$ on which the coefficient varies. Let $\tilde u_H(t)$ be the piecewise linear function that interpolates $\mathbf{\uC}$ in time. Then 
	\begin{equation*}\label{eq:totalerror}
	\|u - \tilde{u}_H\|_{L^2(0,T;H^1_{\Gamma}(\Omega))} \lesssim_T H + \Delta t^2.
	\end{equation*}
\end{corollary}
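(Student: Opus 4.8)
The plan is to combine the time-discretization error bound of Theorem~\ref{t:error} with the spatial approximation property \eqref{e:approxVC}, controlling each of the terms on the right-hand side of \eqref{eq:errorbound} under the stated regularity. First I would bound the best-approximation error $\rho(t)=u(t)-\Pi_{\VC}u(t)$. Since $\Pi_{\VC}$ is the $a$-orthogonal projection, \eqref{e:approxVC} gives $\|\nabla\rho(t)\|\lesssim H\|\ddiv A\nabla u(t)\|$, and the key observation is that time differentiation commutes with the (time-independent) projection, so $\dot\rho$ and $\ddot\rho$ are the best-approximation errors of $\dot u$ and $\ddot u$. Hence $\|\nabla\dot\rho(t)\|\lesssim H\|\ddiv A\nabla\dot u(t)\|$ and similarly for $\ddot\rho$; I would use $\ddot u\in C(0,T;H^1(\Omega))$ together with the equation $\ddot u=\ddiv A\nabla u+f$ to express the relevant $\ddiv A\nabla$ quantities through $\ddot u$, $\dddot u$ and $f$, $\dot f$, all assumed bounded independently of $\eps$. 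This yields $\|\nabla\rho(t_n)\|\lesssim H$, $\|\dot\rho\|\lesssim H$ (controlling the difference-quotient term after a mean-value estimate), and $\int_0^{t_n}\|\ddot\rho(s)\|\,ds\lesssim_T H$.

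Next I would handle the two consistency-type terms. The discrete-to-continuous time-derivative defect $\frac{z_H(t_{k+1})-2z_H(t_k)+z_H(t_{k-1})}{\Delta t^2}-\ddot z_H(t_k)$ is a standard centered second-difference truncation error; by Taylor expansion and $z_H\in C^4(0,T;L^2(\Omega))$ it is bounded by $C\Delta t^2\|z_H^{(4)}\|_{C(0,T;L^2)}$ per term, and summing $n\lesssim T/\Delta t$ such contributions weighted by $\Delta t$ gives a total $\lesssim_T\Delta t^2$. For the initial data I would choose the starting values consistently: since $z_H(0)=(1-\cor)I_Hu_0=\tilde u_0$ and $\dot z_H(0)=(1-\cor)I_Hv_0$ lie in $\VC$, the terms $\|\nabla(z_H(0)-\Pi_{\VC}u(0))\|$ and $\|\dot z_H(0)-\Pi_{\VC}\dot u(0)\|$ are again best-approximation errors of the initial data, bounded by $H$ via \eqref{e:approxVC}. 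The remaining genuinely discrete initial errors $\|\dot E^{1/2}\|$ and $\|\nabla E^1\|$ are made $O(\Delta t^2)$ by picking $\tilde u_1$ from a second-order accurate startup (e.g.\ a Taylor step $\tilde u_1=\tilde u_0+\Delta t\,\dot z_H(0)+\tfrac{\Delta t^2}{2}\ddot z_H(0)$), which I would verify reproduces $z_H(t_1)$ up to $O(\Delta t^3)$ in $L^2$ and $O(\Delta t^2)$ in the energy norm.

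Collecting these estimates in \eqref{eq:errorbound} bounds the energy-norm error at each time level $t_{n+1}$ by $C_s(H+\Delta t^2)$, uniformly in $n$. To pass from this pointwise-in-time bound to the $L^2(0,T;H^1_\Gamma)$ norm of the interpolant $\tilde u_H(t)$, I would note that on each subinterval the piecewise-linear-in-time interpolant satisfies $\|u-\tilde u_H\|_{H^1}\le\max\{\|u(t_n)-\tilde u_n\|_{H^1},\|u(t_{n+1})-\tilde u_{n+1}\|_{H^1}\}$ plus the interpolation error of $u$ itself in time, the latter being $O(\Delta t^2)$ under $u\in C^3(0,T;L^2)$ combined with the $H^1$ regularity; integrating over $(0,T)$ then absorbs a factor $\sqrt T$ into the $\lesssim_T$ constant. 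The main obstacle I anticipate is not any single estimate but the bookkeeping needed to ensure every constant is genuinely independent of the fine scale $\eps$: this hinges on the hypothesis that the stated higher-order norms of $u$, $z_H$ and $f$ do not blow up as $\eps\to0$, and on repeatedly trading $\ddiv A\nabla(\cdot)$ for time derivatives through the PDE so that the factor $H$ always multiplies an $\eps$-robust quantity rather than $\|\ddiv A\nabla u\|$ directly, which would otherwise scale like $\eps^{-1}$.
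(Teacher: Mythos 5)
Your overall architecture (invoke Theorem~\ref{t:error}, bound each term on the right-hand side of \eqref{eq:errorbound}, then pass to the $L^2(0,T;H^1_\Gamma(\Omega))$ norm of the time interpolant) is the intended one, but the treatment of the seventh term $\int_0^{t_n}\|\ddot\rho(s)\|\,ds$ has a genuine gap. Your step ``$\|\nabla\dot\rho(t)\|\lesssim H\|\ddiv A\nabla\dot u(t)\|$ and similarly for $\ddot\rho$'' applies \eqref{e:approxVC} to $\ddot u$, which requires $\ddiv A\nabla\ddot u(t)\in L^2(\Omega)$. Via the differentiated equation this quantity equals $u^{(4)}-\ddot f$, so your argument implicitly needs $u\in C^4(0,T;L^2(\Omega))$ and $f\in C^2(0,T;L^2(\Omega))$ --- precisely the ``standard assumptions'' that the corollary is designed to avoid; they are not implied by the stated hypotheses, and for a rough coefficient $\ddot u\in C(0,T;H^1(\Omega))$ only gives $\ddiv A\nabla \ddot u \in H^{-1}(\Omega)$, with any formal $L^2$ bound scaling like $\eps^{-1}$. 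Note also that what must be controlled is the $L^2$ norm $\|\ddot\rho\|$, so the mere energy stability of the projection, $\|\nabla\ddot\rho\|\lesssim\|\nabla\ddot u\|$, yields only $O(1)$, not the required $O(H)$; your listed quantities $\ddot u,\dddot u,f,\dot f$ suffice for $\rho$ and $\dot\rho$, but not for $\ddot\rho$.

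The missing idea is the one the paper points to in the remark following the corollary: since $\Pi_{\VC}$ is the $a$-orthogonal projection and $V=\VC\oplus W$ with $a(\VC,W)=0$, the error $\rho(t)$ --- and hence also $\ddot\rho(t)$ --- lies in $W=\Ker I_H$. Therefore \eqref{e:IHapproxstab} gives $\|\ddot\rho(s)\|=\|(1-I_H)\ddot\rho(s)\|\le C_{I_H}H\|\nabla\ddot\rho(s)\|$, and the $a$-orthogonality of $\Pi_{\VC}$ gives $\alpha\|\nabla\ddot\rho(s)\|^2\le a(\ddot\rho(s),\ddot\rho(s))=a(\ddot u(s),\ddot\rho(s))\le\beta\|\nabla\ddot u(s)\|\,\|\nabla\ddot\rho(s)\|$, so that $\int_0^{t_n}\|\ddot\rho(s)\|\,ds\lesssim_T H\|\nabla\ddot u\|_{C(0,T;L^2(\Omega))}$, using exactly $\ddot u\in C(0,T;H^1(\Omega))$ and nothing more. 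A smaller inaccuracy: the initial-data terms $\|\dot z_H(0)-\Pi_{\VC}\dot u(0)\|$ and $\|\nabla(z_H(0)-\Pi_{\VC}u(0))\|$ are not merely $O(H)$ best-approximation errors --- they vanish identically, because $(1-\cor)I_H$ coincides with $\Pi_{\VC}$ (for any $v\in V$ one has $v-(1-\cor)I_Hv\in\Ker I_H=W$, which is the $a$-orthogonal complement of $\VC$); this matters because your proposed bound of the third term via \eqref{e:approxVC} would otherwise require $\ddiv A\nabla v_0\in L^2(\Omega)$, which is not among the corollary's hypotheses. The remaining parts of your outline (Taylor estimate for the $z_H$ consistency sum under $z_H\in C^4(0,T;L^2(\Omega))$, a second-order startup for $E^1$ combined with the CFL relation, and the passage from nodal energy bounds to the $L^2(0,T;H^1_\Gamma(\Omega))$ norm) are sound and match the intended argument.
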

\noindent Note that the standard assumptions on $u$, $z_H$ and $f$ are $u\in C^4(0,T;L^2(\Omega))$, $z_H\in C^4(0,T;L^2(\Omega))$ and $f\in C^2(0,T;L^2(\Omega))$. However, with \eqref{e:IHapproxstab} and the fact that $\rho(t) \in W$ the above assumptions are sufficient to bound the fifth to seventh term in \eqref{eq:errorbound} by $H$.

\subsection{Regularity}\label{ss:reg}
We shall finally show that the regularity conditions of Corollary~\ref{c:error} can be met for relevant classes of problems with arbitrarily rough coefficients that are characterized by the right-hand side $f$ and the initial conditions.
\begin{lemma}[Regularity]\label{l:regts}
	Let $u$ be the solution of \eqref{eq:weak} and $z_H$ the semi-discrete solution of \eqref{eq:semidiscrete}. Suppose that $f \in H^{3}(0,T;L^2(\Omega))$ and
	\begin{itemize}
		\addtolength{\itemindent}{0.3cm}
		\item[$\mathrm{(A1)}$] $v_0 \in H^1_{\Gamma}(\Omega) $,
		\item[$\mathrm{(A2)}$] $\ddot u(0) := f(0) + \ddiv A \nabla u_0 \in H^1_\Gamma(\Omega)$,
		\item[$\mathrm{(A3)}$] $u^{(3)}(0) := \dot f(0) + \ddiv A \nabla v_0 \in H^1_\Gamma(\Omega)$,
		\item[$\mathrm{(A4)}$] $u^{(4)}(0) := \ddot f(0) + \ddiv A \nabla \ddot u(0) \in L^2(\Omega)$. 
	\end{itemize}
	Further assume that the corresponding norms can be bounded independently of the fine scale $\eps$ on which $A$ varies. 
	Then $u$ and $z_H$ satisfy the assumptions of Corollary~\ref{c:error}, i.e., $u\in C^3(0,T;L^2(\Omega))$, $\ddot u \in C(0,T;H^1(\Omega))$ and $z_H\in C^4(0,T;L^2(\Omega))$.
\end{lemma}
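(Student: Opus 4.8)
The plan is to obtain the time regularity by differentiating the wave equation repeatedly in time and applying energy estimates, using the compatibility conditions $\mathrm{(A1)}$--$\mathrm{(A4)}$ to control the initial data of the differentiated problems. The decisive structural observation is that, since $A$ is time-independent, differentiating \eqref{eq:weak} $k$ times shows that $u^{(k)}$ again solves the wave equation, now with right-hand side $f^{(k)}$, and that its initial data obey the recursion $u^{(0)}(0) = u_0$, $u^{(1)}(0) = v_0$ and $u^{(k+2)}(0) = f^{(k)}(0) + \ddiv A \nabla u^{(k)}(0)$; the quantities in $\mathrm{(A2)}$--$\mathrm{(A4)}$ are precisely $u^{(2)}(0)$, $u^{(3)}(0)$ and $u^{(4)}(0)$. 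Since $u$ is only weakly regular a priori, I would not differentiate the solution directly but instead reproduce the Galerkin construction of \cite[Ch.~7.2]{evans2010partial} for the differentiated problems, where all time derivatives of the approximations are smooth, derive the energy estimates there, and pass to the limit; the compatibility conditions ensure that the discretized initial data converge in the norms required below. It is essential that no spatial regularity of $u$ is used: because $A \in L^\infty$ one cannot upgrade $\ddiv A \nabla u^{(k)}(0) \in L^2(\Omega)$ to $u^{(k)}(0) \in H^2(\Omega)$, so all bounds are kept in terms of the operator $-\ddiv A \nabla$ and its compatible data.

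The workhorse is the energy identity: if $w$ solves $\ddot w - \ddiv A \nabla w = g$, testing with $\dot w$ gives $\tfrac{d}{dt}(\tfrac12\|\dot w\|^2 + \tfrac12 a(w,w)) = (g,\dot w)$, hence $\|\dot w(t)\| + \|\nabla w(t)\| \lesssim \|\dot w(0)\| + \|\nabla w(0)\| + \int_0^t \|g(s)\|\,ds$. Applying this to $w = \ddot u$ with $g = \ddot f$, the initial data are $w(0) = \ddot u(0) \in V$ and $\dot w(0) = u^{(3)}(0) \in L^2(\Omega)$ by $\mathrm{(A2)}$--$\mathrm{(A3)}$, while $\int_0^T \|\ddot f\| < \infty$ follows from $f \in H^3(0,T;L^2(\Omega)) \hookrightarrow C^2(0,T;L^2(\Omega))$. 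This yields $\nabla \ddot u \in C(0,T;L^2(\Omega))$, i.e. $\ddot u \in C(0,T;H^1(\Omega))$, and $u^{(3)} \in C(0,T;L^2(\Omega))$, i.e. $u \in C^3(0,T;L^2(\Omega))$, which are the two assertions about $u$.

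For $z_H$ I first note that \eqref{eq:semidiscrete} is a linear second-order ODE system on the finite-dimensional space $\VC$; since $f \in H^3(0,T;L^2(\Omega))$ embeds into $C^2$, a direct bootstrap of the ODE already gives $z_H \in C^4(0,T;L^2(\Omega))$, so the only real task is the $\eps$-independence of the corresponding norms. As $\VC$ is time-independent, I can differentiate \eqref{eq:semidiscrete} in time and test with $\partial_t z_H^{(k)} \in \VC$ to obtain the discrete energy identity, $z_H^{(k)}$ solving the semi-discrete wave equation with data $f^{(k)}$. To control the initial values I use that $(1-\cor)I_H = \Pi_{\VC}$ (indeed $v - (1-\cor)I_H v \in W = \Ker I_H$, and $W$ is the $a$-orthogonal complement of $\VC$), which gives $z_H(0) = \Pi_{\VC} u_0$, $\dot z_H(0) = \Pi_{\VC} v_0$, and then, testing the differentiated equations at $t=0$, $\ddot z_H(0) = \Pi_{\VC}^{L^2} \ddot u(0)$, $z_H^{(3)}(0) = \Pi_{\VC}^{L^2} u^{(3)}(0)$ and $z_H^{(4)}(0) = \Pi_{\VC}^{L^2} u^{(4)}(0)$, with $\Pi_{\VC}^{L^2}$ the $L^2$-projection onto $\VC$. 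Feeding these into the energy estimates for $w = \ddot z_H$ and $w = z_H^{(3)}$ and using $\mathrm{(A1)}$--$\mathrm{(A4)}$ together with the $L^2$-stability of $\Pi_{\VC}^{L^2}$ and the stability of $\Pi_{\VC}$ then bounds $\|z_H^{(k)}(t)\|$ for $k \le 4$ uniformly on $[0,T]$.

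The main obstacle is the low regularity of $A$ combined with the demand for high time regularity: every differentiation step must be justified at the Galerkin level before passing to the limit, and all estimates must be phrased through $-\ddiv A \nabla$ rather than through unavailable elliptic $H^2$-bounds, the conditions $\mathrm{(A1)}$--$\mathrm{(A4)}$ being exactly what make the initial data of the differentiated problems land in the correct spaces. A second, more technical point concerns the $z_H$ estimates at the top levels, where the energy bounds require controlling $\|\nabla \ddot z_H(0)\| = \|\nabla \Pi_{\VC}^{L^2} \ddot u(0)\|$ and $\|\nabla z_H^{(3)}(0)\| = \|\nabla \Pi_{\VC}^{L^2} u^{(3)}(0)\|$; here one needs the $H^1$-stability of the $L^2$-projection onto $\VC$ (which follows $\eps$-independently from the approximation property and the inverse inequality \eqref{eq:invineq2} of the corrected space, or, at the cost of an $H$-dependent constant, directly from \eqref{eq:invineq2}) to guarantee that the constant in $z_H \in C^4(0,T;L^2(\Omega))$, and hence in the consistency term of Theorem~\ref{t:error}, is independent of the oscillation scale $\eps$.
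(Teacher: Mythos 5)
Your overall route coincides with the paper's: differentiate the wave equation and the semi-discrete equation in time, use (A1)--(A4) to supply admissible initial data for the differentiated problems, run energy estimates at the Galerkin level (which is what the paper's citation of Evans, Ch.~7.2, amounts to, and is indeed the right way to phrase it, since Evans' higher-regularity theorems with \emph{spatial} smoothing are unavailable for $A\in L^\infty$), and use finite-dimensional ODE bootstrapping for $z_H$. For the qualitative conclusions $u\in C^3(0,T;L^2(\Omega))$, $\ddot u\in C(0,T;H^1(\Omega))$, $z_H\in C^4(0,T;L^2(\Omega))$ your argument is correct, and in one place it is more precise than the paper: the identities $\ddot z_H(0)=\Pi^{L^2}_{\VC}\ddot u(0)$ and $z_H^{(3)}(0)=\Pi^{L^2}_{\VC}u^{(3)}(0)$ are the correct ones (they hold because $z_H(0)=(1-\cor)I_Hu_0$ and $\dot z_H(0)=(1-\cor)I_Hv_0$ are the $a$-orthogonal projections of $u_0$ and $v_0$, so the stiffness terms in \eqref{eq:semidiscrete} at $t=0$ equal $a(u_0,\vC)$ and $a(v_0,\vC)$ and can be integrated by parts), whereas the paper's proof loosely writes $z_H^{(i)}(0)=(1-\cor)I_Hu^{(i)}(0)$ for all $i\le 3$.

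There is, however, one step in your quantitative ($\eps$-independence) argument that is wrong: $z_H^{(4)}(0)=\Pi^{L^2}_{\VC}u^{(4)}(0)$ does not hold in general. Differentiating \eqref{eq:semidiscrete} twice and evaluating at $t=0$ gives $(z_H^{(4)}(0),\vC)=(\ddot f(0),\vC)-a(\ddot z_H(0),\vC)$, and now $\ddot z_H(0)=\Pi^{L^2}_{\VC}\ddot u(0)$ is the $L^2$-projection, \emph{not} the $a$-orthogonal projection, of $\ddot u(0)$; hence $a(\ddot z_H(0),\vC)\neq a(\ddot u(0),\vC)$ and your recursion ``each new initial value is the $L^2$-projection of the continuous one'' breaks after exactly one step (it worked for $i=2,3$ only because $z_H(0)$ and $\dot z_H(0)$ are $a$-orthogonal projections). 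Since this identity is what you use to bound $\|z_H^{(4)}(0)\|$, which is the initial datum of your top-level energy estimate, the uniform bound on $\|z_H^{(4)}\|_{C(0,T;L^2(\Omega))}$ is not yet established. The gap is repairable with tools already in the paper: write $a(\ddot z_H(0),\vC)=a(\ddot u(0),\vC)+a\bigl(\Pi^{L^2}_{\VC}\ddot u(0)-\ddot u(0),\vC\bigr)$; the first term equals $(-\ddiv A\nabla \ddot u(0),\vC)$ and is bounded via (A4); for the second, combine the $H^1$-stability (hence $H^1$-quasi-optimality) of $\Pi^{L^2}_{\VC}$ with the approximation property \eqref{e:approxVC} — applicable since $\ddot u(0)\in H^1_\Gamma(\Omega)$ and $\ddiv A\nabla\ddot u(0)=u^{(4)}(0)-\ddot f(0)\in L^2(\Omega)$ by (A2) and (A4) — to get $\|\nabla(\Pi^{L^2}_{\VC}\ddot u(0)-\ddot u(0))\|\lesssim H\|\ddiv A\nabla\ddot u(0)\|$, and then apply the inverse inequality \eqref{eq:invineq2} so that the factors $H$ and $H^{-1}$ cancel, giving $|a(\Pi^{L^2}_{\VC}\ddot u(0)-\ddot u(0),\vC)|\lesssim \|\ddiv A\nabla\ddot u(0)\|\,\|\vC\|$ with a constant depending only on $\alpha$, $\beta$, $C_{I_H}$ and $C_{\mathrm{inv}}$. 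This yields an $\eps$- and $H$-independent bound for $\|z_H^{(4)}(0)\|$ and closes your argument; note that the paper's own proof never carries out this uniformity tracking, so here your (repaired) proposal actually goes beyond it.
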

\begin{proof}
	Differentiating \eqref{eq:strongform} with respect to time and using the assumptions (A1)-(A4) shows that the time derivatives of $u$ solve wave-type equations as well. From \cite[Ch. 7.2]{evans2010partial} we get the necessary regularity $u \in H^4(0,T;L^2(\Omega))$, from which it follows that $u\in C^3(0,T;L^2(\Omega))$. It further holds that $u \in H^3(0,T;H^1(\Omega))$ and thus $\ddot u \in C(0,T;H^1(\Omega))$. 
	
	\noindent It remains to show that $z_H\in C^4(0,T;L^2(\Omega))$. As above, we can differentiate \eqref{eq:semidiscrete} with respect to time. Further, we use the fact that the initial conditions are defined by $z_H^{(i)}(0)= (1-\cor)I_H u^{(i)}(0)\in\VC$ for $i = 0,\dots,3$. Since solutions to equations of the form \eqref{eq:semidiscrete} are in $C^2(0,T;\VC)$ by standard ODE theory, it follows that $z_H\in C^4(0,T;\VC)$ which concludes the proof.
\end{proof}
\begin{remark}
	The regularity assumptions (A1)-(A4) and $f \in H^{3}(0,T;L^2(\Omega))$ on the initial data and the right-hand side correspond to the conditions in \cite{abdulle2017localized} for the implicit setting and are referred to as \textit{`well-prepared and compatible of order $3$'}.
\end{remark}

\subsection{A simplified method}\label{ss:simmethod}
The regularity properties of the solution $u$ due to the assumptions (A1)-(A4) and $f \in H^{3}(0,T;L^2(\Omega))$ allow for the following simplification of the method defined in \eqref{eq:lod}. First, observe that \eqref{eq:lod} can be written as an equation for standard finite element functions $\mathbf{{u}_H} = ({u}_n)_{n = 0,..,N}$ with $u_n \in \VH$ using
the explicit characterization $\tilde{u}_n = (1-\cor)u_n$, i.e.,
\begin{equation*}\label{eq:lodVH}
\begin{aligned}
&\Delta t^{-2}\left((1-\cor)(u_{n+1} - 2 u_n + u_{n-1}),(1-\cor)\vH\right) + a\left((1-\cor)u_n,(1-\cor)\vH\right)\\ 
=\quad& \left(f(n\Delta t),(1-\cor)\vH\right)
\end{aligned}
\end{equation*}
for all $\vH \in \VH$. A slightly modified method with reduced computational costs seeks $\mathbf{\bar{u}_H} = (\bar{u}_n)_{n = 0,..,N}$ with $\bar{u}_n \in \VH$ such that 
\begin{equation}\label{eq:lodFEM}
\begin{aligned}
\hspace{-1pt}\Delta t^{-2}\left(\bar u_{n+1} - 2 \bar u_n + \bar u_{n-1},\vH\right) + a\left((1-\cor)\bar u_n,(1-\cor)\vH\right) = \left(f(n\Delta t),\vH\right)
\end{aligned}
\end{equation}
for all $\vH \in \VH$. Note that the solution of \eqref{eq:lodFEM} also fulfills stability properties similar to \eqref{eq:stability}. Analogically to \eqref{eq:errorbound}, we can thus also show that
\begin{equation*}\label{eq:totalerror2}
\|u - (1-\cor)\bar{u}_H\|_{L^2(0,T;H^1_{\Gamma}(\Omega))} \lesssim_T H + \Delta t^2
\end{equation*}
under the assumption that the regularity properties of Lemma~\ref{l:regts} hold. Hence, it is reasonable to use the simplified method in practice. See also Chapter~\ref{s:numres}.
\begin{remark}
	The simplification in \eqref{eq:lodFEM} might raise the question whether mass lumping is also a possible modification. Numerical experiments show that mass lumping only works if the coefficient is essentially constant and can have a significant effect on the convergence rate otherwise. This is related to the fact that for general coefficients additional $H^2$ regularity in space cannot be expected.
\end{remark}

\section{The Practical Method}\label{s:practasp}
The method discussed in Section~\ref{s:numups} is idealized in the sense that we have implicitly assumed that the corrector problems \eqref{e:corprob} can be solved exactly. In practice, those problems are discretized and localized as explained in Section~\ref{ss:fsdisc} and \ref{ss:loccor}. 

\subsection{Discretization of fine scales}\label{ss:fsdisc}
As a first step, the problems \eqref{e:corprob} are discretized using classical finite elements.
To quantify the error introduced by such a procedure, let $\mathbf{\tilde{u}_H} = (\tilde{u}_n)_{n = 0,..,N}$ with $\tilde u_n = (1 - \cor)u_n \in \VC$ be the solution of problem \eqref{eq:lod}. Further, define for any $\vH \in \VH$ the discretized correction $\cor_h \vH$ as the finite element solution of \eqref{e:corprob} based on a discrete space $W_h \subset W$ on a mesh $\mathcal{T}_h$ with mesh size $h \leq \eps$, i.e., the mesh size is chosen small enough to resolve variations of the coefficient $A$. Note that $W_h \subset V_h$, with $V_h$ being the standard $P_1$/$Q_1$ finite element space based on the mesh $\mathcal{T}_h$. Denote by $\mathbf{\tilde{u}_{H,h}} = (\tilde{u}_{h,n})_{n = 0,..,N}$ with $\tilde u_{h,n}=(1-\cor_h) u_{h,n}$ the solution of \eqref{eq:lod} in the space $(1-\cor_h)\VH$. The following lemma quantifies the difference of these two solutions.
\begin{lemma}[Fine scale discretization error]\label{l:discerror}
	Under the assumptions of Lemma~\ref{l:stab} and Lemma~\ref{l:regts}, it holds that 
	\begin{equation*}
	\left\|\frac{\tilde u_n - \tilde u_{n-1}}{\Delta t} - \frac{\tilde u_{h,n} - \tilde u_{h,n-1}}{\Delta t}\right\| + \|\nabla(\tilde u_n - \tilde u_{h,n})\| \lesssim_T d_{\VC}[V_h] + H^{-1}(d_{\VC}[V_h])^2,
	\end{equation*}
	with the approximation error $d_{\VC}[V_h]$ defined by
	\begin{equation*}
	d_{\VC}[V_h] \coloneqq \sup_{v \in \VC}\frac{\inf_{\vh \in \Vh}\|\nabla(v - \vh)\|}{\|\nabla v\|}.
	\end{equation*}
\end{lemma}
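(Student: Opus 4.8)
The plan is to compare the two schemes through the intermediate sequence $w_n := (1-\cor_h)u_n \in (1-\cor_h)\VH$, obtained by applying the \emph{discrete} corrector to the coarse part $u_n = I_H\tilde u_n \in \VH$ of the idealized solution, and to split the error as $\tilde u_n - \tilde u_{h,n} = (\tilde u_n - w_n) + (w_n - \tilde u_{h,n})$. The first summand $\tilde u_n - w_n = (\cor_h-\cor)u_n \in W$ is a pure corrector-discretization error. The second summand $\theta_n := w_n - \tilde u_{h,n} = (1-\cor_h)(u_n - u_{h,n})$ lies entirely in the single space $(1-\cor_h)\VH$, so it can be controlled by the stability estimate of Lemma~\ref{l:stab}, which holds verbatim for the discretized scheme under the same CFL condition once $w_n$ is inserted as a consistent perturbation.

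First I would record the corrector-discretization bound $\|\nabla(\cor-\cor_h)\vH\| \leq C\,d_{\VC}[V_h]\,\|\nabla\vH\|$ for all $\vH\in\VH$. Since $\cor_h\vH$ is the $a$-orthogonal projection of $\cor\vH$ onto $W_h$, Céa's lemma reduces this to estimating $\inf_{w_h\in W_h}\|\nabla(\cor\vH-w_h)\|$; given $\vh\in\Vh$ realizing (up to a constant) the defining infimum of $d_{\VC}[V_h]$ for $(1-\cor)\vH\in\VC$, the admissible competitor $(1-I_H)(\vH-\vh)\in W_h$ approximates $\cor\vH$ to the right order, using $I_H\cor\vH=0$, the stability of $I_H$, and $\|\nabla(1-\cor)\vH\|\lesssim\|\nabla\vH\|$. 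Applying this with $\vH=u_n$, together with the uniform bound $\|\nabla u_n\|\lesssim 1$ from Lemma~\ref{l:stab}, the $L^2$-approximation property $\|e\|\le C_{I_H}H\|\nabla e\|$ for $e\in W$, and the inverse inequalities on $\VH$ and $(1-\cor_h)\VH$, then controls both $\|\nabla(\tilde u_n-w_n)\|$ and the discrete-velocity norm $\|\Delta t^{-1}((\tilde u_n-w_n)-(\tilde u_{n-1}-w_{n-1}))\|$ by $O(d_{\VC}[V_h])$.

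For $\theta_n$ I would insert $w_n$ into the discretized scheme and read off the residual functional $R_n(\phi) := \Delta t^{-2}(w_{n+1}-2w_n+w_{n-1},\phi) + a(w_n,\phi) - (f(n\Delta t),\phi)$ for $\phi\in(1-\cor_h)\VH$, so that $\theta_n$ solves the leapfrog scheme with forcing $R_n$. Writing $\phi=(1-\cor_h)\psi_H$, $\psi_H=I_H\phi$, and $\bar\phi:=(1-\cor)\psi_H\in\VC$ with $\phi-\bar\phi=(\cor-\cor_h)\psi_H\in W$, the identities $a(\VC,W)=0$, $a((1-\cor_h)\VH,W_h)=0$ and the defining relation of $\cor$ give $a(w_n,\phi)=a(\tilde u_n,\bar\phi)+a((\cor_h-\cor)u_n,\phi-\bar\phi)$, whereupon the $\bar\phi$-contribution cancels exactly against the idealized equation \eqref{eq:lod}. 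What remains is (i) $\Delta t^{-2}(\tilde u_{n+1}-2\tilde u_n+\tilde u_{n-1},\phi-\bar\phi)$, (ii) $\Delta t^{-2}((\cor_h-\cor)(u_{n+1}-2u_n+u_{n-1}),\phi)$, (iii) the energy term $a((\cor_h-\cor)u_n,\phi-\bar\phi)$, and (iv) $-(f(n\Delta t),\phi-\bar\phi)$. Using $\|\phi-\bar\phi\|\lesssim d_{\VC}[V_h]\|\phi\|$ (from the $W$-approximation and the inverse inequality on $\VH$), terms (i), (ii), (iv) are each $O(d_{\VC}[V_h])\|\phi\|$, while the energy term (iii) is bounded by $\beta\|\nabla(\cor_h-\cor)u_n\|\,\|\nabla(\phi-\bar\phi)\|\lesssim d_{\VC}[V_h]\cdot d_{\VC}[V_h]\,H^{-1}\|\phi\|$, which produces the quadratic contribution $H^{-1}(d_{\VC}[V_h])^2$ (both factors are $O(d_{\VC}[V_h])$ correctors, and one inverse inequality is spent). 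Taking the supremum over $\phi$ yields $\|r_n\|\lesssim d_{\VC}[V_h]+H^{-1}(d_{\VC}[V_h])^2$, and the stability estimate gives $\|\dot\theta_{n+1/2}\|+\|\nabla\theta_{n+1}\|\lesssim\Delta t\sum_k\|r_k\|+(\text{initial errors})\lesssim_T d_{\VC}[V_h]+H^{-1}(d_{\VC}[V_h])^2$, the factor $T$ entering through $\Delta t\sum_k\|f(k\Delta t)\|$. Since $\tilde u_0$ and $\tilde u_{h,0}$ share the coarse part $I_H u_0$, one has $\theta_0=0$, and I would choose the starting values $\tilde u_1,\tilde u_{h,1}$ compatibly so that $\|\dot\theta_{1/2}\|+\|\nabla\theta_1\|$ is of the same order; adding the two summands then gives the assertion.

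The decisive point, and the part I expect to require the most care, is that terms (i) and (ii) rely on the uniform discrete-acceleration bound $\|\Delta t^{-2}(\tilde u_{n+1}-2\tilde u_n+\tilde u_{n-1})\|\lesssim 1$, independent of $n$ and of the fine scale $\eps$; the plain energy stability alone only yields $O(H^{-1})$ here, which would degrade (i) and (ii) to $O(H^{-1}d_{\VC}[V_h])$ and destroy the estimate. This discrete regularity is exactly where the compatibility hypotheses of Lemma~\ref{l:regts} enter, in particular $f\in H^{3}(0,T;L^2(\Omega))$ and (A1)--(A4): applying the stability estimate of Lemma~\ref{l:stab} to the leapfrog schemes satisfied by the discrete velocities $\dot{\tilde u}_{n+1/2}$ and their further differences (obtained by differencing \eqref{eq:lod} in time) bounds $\|\dot{\tilde u}_{n+1/2}\|$, $\|\nabla\tilde u_n\|$ and the discrete acceleration uniformly. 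Establishing these differenced stability estimates with $\eps$-independent constants, and checking that the chosen starting values feed compatible data into them, is the technical heart of the argument.
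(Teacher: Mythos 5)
Your proposal is correct and is essentially the paper's own argument in mirror image: where the paper writes the error equation for $\tilde e_n=(1-\cor)(u_n-u_{h,n})$ in the ideal space $\VC$ and collects a residual functional $F^n$ whose four terms (forcing, discrete acceleration, and energy contributions paired with $(\cor-\cor_h)\vH$) correspond one-to-one to your terms (i)--(iv), you pose the equation for $\theta_n=(1-\cor_h)(u_n-u_{h,n})$ in the discrete space $(1-\cor_h)\VH$ and use exactly the same key estimates --- $\|\nabla(\cor-\cor_h)\vH\|\lesssim d_{\VC}[V_h]\,\|\nabla\vC\|$, its $L^2$ version $\|(\cor-\cor_h)\vH\|\lesssim H\,d_{\VC}[V_h]\,\|\nabla\vC\|$ via membership in $W$, the inverse inequality, the leapfrog stability estimate, and the uniform discrete-acceleration bound obtained by differencing the scheme in time under the assumptions of Lemma~\ref{l:regts}, which is precisely the paper's closing remark after the proof. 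The only minor deviation is that you prove the corrector-difference bound constructively via C\'ea's lemma with the competitor $(1-I_H)(\vH-\vh)\in W_h$, whereas the paper cites finite element saddle-point theory for the same inequality.
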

\begin{proof}
	Observe that the error $\tilde e_n = (1-\cor)(u_n - u_{h,n})$ solves
	\begin{equation*}\label{eq:loddiscerr1}
	\begin{aligned}
	&\Delta t^{-2}\left( \tilde e_{n+1} - 2  \tilde e_n +  \tilde e_{n-1},(1-\cor)\vH\right)+ a\left(\tilde e_n,(1-\cor)\vH\right)\\ 
	=\quad& (-f (n\Delta t),(\cor-\cor_h)\vH) + \Delta t^{-2}(\tilde u_{h,n+1} - 2 \tilde u_{h,n} + \tilde u_{h,n-1},(\cor-\cor_h)\vH) \\
	&+ \Delta t^{-2}((\cor-\cor_h)(u_{h,n+1} - 2 u_{h,n} + u_{h,n-1}),(1-\cor)\vH) \\
	&+a((\cor-\cor_h) u_{h,n},(\cor-\cor_h)\vH) \eqqcolon F^n((1-\cor)\vH)
	\end{aligned}
	\end{equation*}
	for all $\vH \in \VH$. If ${F^n\vert}_{\VC} \in L^2(\Omega)$, we can derive a bound on the error using similar arguments as in the derivation of \eqref{eq:stability}. First, we need to estimate $\|\nabla(\cor-\cor_h)\vH\|$. Using the fact that the corrector problems \eqref{e:corprob} can be interpreted as saddle point problems, we obtain from finite element saddle point theory \cite[Ch. 2.2.2]{brezzi2012mixed} that
	\begin{equation}\label{e:diffcor1}
	\begin{aligned}
	\|\nabla(\cor-\cor_h)\vH\| &\leq (1 + C_{I_H})(1 + \beta/\alpha) \inf_{\vh \in \Vh} \|\nabla(\cor \vH - \vh)\| \\
	&\leq C_* \sup_{v \in \VC}\frac{\inf_{\vh \in \Vh}\|\nabla(v - \vh)\|}{\|\nabla v\|}\|\nabla \vC\| \\
	& = C_*\, d_{\VC}[V_h]\, \|\nabla\vC\|
	\end{aligned}
	\end{equation}
	with $C_* = (1 + C_{I_H})(1 + \beta/\alpha)$ and $\vC = (1-\cor)\vH$ or $\vC = (1-\cor_h)\vH$. Using \eqref{e:IHapproxstab}, we further get
	\begin{equation}\label{e:diffcor2}
	\|(\cor-\cor_h)\vH\| \leq C_{I_H}H\,C_* \, d_{\VC}[V_h]\,\|\nabla\vC\|.
	\end{equation}
	With \eqref{e:diffcor1}, \eqref{e:diffcor2} and the inverse inequality \eqref{eq:invineq2}, we can derive the following bound,
	\begin{equation*}
	\begin{aligned}
	\sup_{\vC \in \VC}\frac{|F^n(\vC)|}{\|\vC\|} &\leq \quad\Bigg( C_{I_H}C_*\tilde C_\mathrm{inv} \Big(\|f(n\Delta t)\| + 2\left\|\frac{\tilde u_{h,n+1} - 2\tilde u_{h,n} + \tilde u_{h,n-1}}{\Delta t^2}\right\|\Big) \\
	&\quad + \,\,\beta C_*^2\tilde C_\mathrm{inv}H^{-1}\|\nabla \tilde u_{h,n}\|\,d_{\VC}[V_h] \Bigg) d_{\VC}[V_h]\\
	&\lesssim \quad\, d_{\VC}[V_h] + H^{-1}(d_{\VC}[V_h])^2.
	\end{aligned}
	\end{equation*}
	Thus, using the above equations and the fact that, for any $v_H,y_H \in \VH$, 
	\begin{equation*}
	\|(1-\cor)v_H - (1-\cor_h)y_H\|_\bullet \leq \|(1-\cor)(v_H-y_H)\|_\bullet + \|(\cor-\cor_h)y_H\|_\bullet,
	\end{equation*}
	it follows as in the derivation of \eqref{eq:stability} that
	\begin{equation*}\label{e:totaldiscerror1}
	\left\|\frac{\tilde u_n - \tilde u_{n-1}}{\Delta t} - \frac{\tilde u_{h,n} - \tilde u_{h,n-1}}{\Delta t}\right\| + \|\nabla(\tilde u_n - \tilde u_{h,n})\| \lesssim_T d_{\VC}[V_h] + H^{-1}(d_{\VC}[V_h])^2.
	\end{equation*}
\end{proof}
\noindent Note that we have used the fact that $\Delta t^{-2}(\tilde u_{h,n+1} -2\tilde u_{h,n} +\tilde u_{h,n-1})$ can be bounded in $L^2$ independently of $\Delta t$ and $H$. To see this, let $\dot{\tilde{u}}_{h,n+1/2} = \Delta t^{-1}(\tilde u_{h,n+1} -\tilde u_{h,n})$ be the discrete time derivative and observe that $(\dot{\tilde{u}}_{h,n+1/2})_{n=0,...,N-1}$ solves
\begin{equation*}
\begin{aligned}
&\Delta t^{-2}\left(\dot{\tilde{u}}_{h,n+3/2} - 2 \dot{\tilde{u}}_{h,n+1/2} + \dot{\tilde{u}}_{h,n-1/2}),\vC\right) + a\left(\dot{\tilde{u}}_{h,n+1/2},\vC\right)\\ 
=\quad& \left(\Delta t^{-1}(f((n+1)\Delta t)-f(n\Delta t)),\vC\right)
\end{aligned}
\end{equation*}
for all $\vC \in (1-\cor_h)\VH$. Therefore, with equation~\eqref{eq:stability} and the regularity assumptions of Lemma~\ref{l:regts} we can bound the $L^2$ norm of $\Delta t^{-2}(\tilde u_{h,n+1} -2\tilde u_{h,n} +\tilde u_{h,n-1})$ in terms of the initial data and the right-hand side.

\subsection{Localization of correctors}\label{ss:loccor}
As a next step, we want to define the solution $\mathbf{\tilde{u}^\ell_{H,h}}$ of the fully practical method and quantify the error between the solutions $\mathbf{\tilde{u}_{H,h}}$ and $\mathbf{\tilde{u}^\ell_{H,h}}$. The \emph{practical method} reads:\\

\itshape Find $\mathbf{\tilde{u}^\ell_{H,h}} = (\tilde{u}^\ell_{h,n})_{n = 0,..,N}$ with $\tilde u^\ell_{h,n}=(1-\cor^\ell_h) u^\ell_{h,n} \in (1-\cor^\ell_h)\VH$, such that \\\indent for $n \geq 2$
\begin{equation}\label{eq:practmethod}
\begin{aligned}
&\Delta t^{-2} (\tilde u^\ell_{h,n+1} - 2 \tilde u^\ell_{h,n} + \tilde u^\ell_{h,n-1},\vC) + a(\tilde u^\ell_{h,n},\vC) = (f(n\Delta t),\vC)
\end{aligned}
\end{equation}
\indent for all $\vC \in (1-\cor^\ell_h)\VH$ and given $\tilde{u}^\ell_{h,0} = (1-\cor^\ell_h)I_H u_0$ and suitable $\tilde{u}^\ell_{h,1} \in$ \\\indent$(1-\cor^\ell_h)\VH$.\upshape\\

\noindent Here, for any $\vH \in \VH$, $\cor^\ell_h \vH$ denotes the discretized solution of \eqref{e:corprob} which is restricted to computations on local patches with $\ell$ layers. 
To be more precise, we first rewrite the operator $\cor_h\colon V_H \to W_h$ as
	\begin{equation*}
	\cor_h v_H = \cor_h\left(\sum_{T\in \tri_H} v_H\vert_T\right) = \cor_h\left(\sum_{T\in \tri_H}\sum_{i = 1}^{n_T} v_H(x_{T,i})\Lambda_{T,i}\right) = \sum_{T\in \tri_H}\sum_{i = 1}^{n_T} v_H(x_{T,i}) q_{T,i},
	\end{equation*}
	where $x_{T,i},\ i=1,\dots n_T$ denote the corner points of $T$ and $\Lambda_{T,i}$ the corresponding nodal basis functions on $T$. Further, the element correctors $q_{T,i}\in W_h$ are defined by
	\begin{equation*}
	a(q_{T,i}, w_h) = a\vert_T(\Lambda_{T,i}, w_h)
	\end{equation*}
	for all $w_h \in W_h$, where $a\vert_T(u,v) \coloneqq \int_T A\nabla u \cdot \nabla v \dx$. Similarly, we can define the operator $\cor^\ell_h\colon V_H \to W_h$ by
	\begin{equation*}
	\cor^\ell_h v_H = \sum_{T\in \tri_H}\sum_{i = 1}^{n_T} v_H(x_{T,i}) q^\ell_{T,i},
	\end{equation*}
	where the localized element correctors $q^\ell_{T,i} \in W_h(\mathcal{N}^\ell(T))$ are given by
	\begin{equation*}
	a(q^\ell_{T,i}, w_h) = a\vert_T(\Lambda_{T,i}, w_h)
	\end{equation*}
	for all $w_h \in W_h(\mathcal{N}^\ell(T))$. Here, $\mathcal{N}^\ell(T)$ is the extension of $T$ by $\ell$ layers of elements and $W_h(\mathcal{N}^\ell(T))$ the restriction of $W_h$ to functions with support in $\mathcal{N}^\ell(T)$. See also \cite{maalqvist2014localization} for further details. Alternatively, the operator $\cor^\ell_h$ could be defined as the $\ell$th iterate of some preconditioned solver based on an overlapping domain decomposition, as shown in \cite{KPY17}.

The computation of the correctors is done during the offline stage and can be parallelized. Furthermore, periodic structure may be exploited. The additional cost to solve the corrector problems is moderate and the main advantage of the method lies in the online stage, where smaller linear systems need to be solved and relatively coarse time steps (subject to the CFL condition) may be used.
\begin{lemma}[Localization error]\label{l:locerror}
	Suppose that the assumptions of Lemma~\ref{l:stab} and Lemma~\ref{l:regts} hold. Then
	\begin{equation*}
	\left\|\frac{\tilde u_{h,n} - \tilde u_{h,n-1}}{\Delta t} - \frac{\tilde u^\ell_{h,n} - \tilde u^\ell_{h,n-1}}{\Delta t}\right\| + \|\nabla(\tilde u_{h,n} - \tilde u^\ell_{h,n})\| \lesssim_T \ell^{d/2}e^{-c\ell} + H^{-1}\ell^{d}e^{-2c\ell}.
	\end{equation*}
\end{lemma}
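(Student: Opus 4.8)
The plan is to follow the proof of Lemma~\ref{l:discerror} almost verbatim, with the localized correctors $\cor^\ell_h$ taking the role previously played by $\cor_h$ and the exponential localization error of the correctors replacing the approximation quantity $d_{\VC}[V_h]$. Concretely, I would study the error $\hat e_n = (1-\cor_h)(u_{h,n} - u^\ell_{h,n}) \in (1-\cor_h)\VH$, subtract scheme \eqref{eq:practmethod} from the scheme for $\tilde u_{h,n}$, and use the splitting $(1-\cor^\ell_h)u^\ell_{h,n} = (1-\cor_h)u^\ell_{h,n} + (\cor_h - \cor^\ell_h)u^\ell_{h,n}$ to obtain a perturbed difference equation for $\hat e_n$ with a source functional $F^n$ whose terms each carry one or two factors of the corrector difference $(\cor_h - \cor^\ell_h)\vH$, exactly mirroring the structure of $F^n$ in Lemma~\ref{l:discerror}.

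The key ingredient, and the main obstacle, is the exponential decay estimate
\begin{equation*}
\|\nabla(\cor_h - \cor^\ell_h)\vH\| \lesssim \ell^{d/2} e^{-c\ell}\,\|\nabla \vC\|,
\end{equation*}
with $\vC = (1-\cor_h)\vH$ or $\vC = (1-\cor^\ell_h)\vH$, which is the central estimate of the LOD theory \cite{maalqvist2014localization}. It rests on showing that the element correctors $q_{T,i}$ decay exponentially in the number of element layers away from $T$, so that truncation to the patch $\mathcal{N}^\ell(T)$ introduces an error of order $e^{-c\ell}$; summing the squared element contributions and using the finite overlap of the patches produces the polynomial prefactor $\ell^{d/2}$. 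Combined with the approximation property \eqref{e:IHapproxstab}, this also yields the companion bound $\|(\cor_h - \cor^\ell_h)\vH\| \lesssim H\,\ell^{d/2} e^{-c\ell}\,\|\nabla\vC\|$, in complete parallel to \eqref{e:diffcor1}--\eqref{e:diffcor2}, so that $\ell^{d/2}e^{-c\ell}$ simply takes the place of $d_{\VC}[V_h]$ throughout.

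With these two estimates at hand I would bound $\sup_{\vC}|F^n(\vC)|/\|\vC\|$ using the inverse inequality \eqref{eq:invineq2} and the fact, already established after Lemma~\ref{l:discerror}, that $\Delta t^{-2}(\tilde u^\ell_{h,n+1} - 2\tilde u^\ell_{h,n} + \tilde u^\ell_{h,n-1})$ is bounded in $L^2$ independently of $\Delta t$ and $H$; the same argument applies to the localized scheme, since its discrete time derivative again solves a wave-type equation driven by the difference quotient of $f$. The terms that are linear in the corrector difference then contribute $\ell^{d/2}e^{-c\ell}$, whereas the purely quadratic term $a((\cor_h - \cor^\ell_h)u^\ell_{h,n}, (\cor_h - \cor^\ell_h)\vH)$ carries two factors of the localization error and contributes $H^{-1}\ell^{d}e^{-2c\ell}$ after division by $\|\vC\|$. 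Finally, applying the stability estimate \eqref{eq:stability} to $\hat e_n$ — whose initial terms $\tilde u_{h,0} - \tilde u^\ell_{h,0} = (\cor^\ell_h - \cor_h)I_H u_0$ and first-step difference are themselves controlled by the localization estimates — and passing from $\hat e_n$ to $\tilde u_{h,n} - \tilde u^\ell_{h,n}$ via the triangle inequality $\|(1-\cor_h)y_H - (1-\cor^\ell_h)y_H\|_\bullet \leq \|(\cor_h - \cor^\ell_h)y_H\|_\bullet$ yields the claimed bound $\ell^{d/2}e^{-c\ell} + H^{-1}\ell^{d}e^{-2c\ell}$.
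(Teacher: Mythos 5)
Your proposal matches the paper's proof essentially verbatim: the paper likewise studies the error $(1-\cor_h)(u_{h,n}-u^\ell_{h,n})$, derives the perturbed difference equation with source functional $F_h^n$ mirroring Lemma~\ref{l:discerror}, invokes the exponential decay bounds $\|\nabla(\cor_h-\cor^\ell_h)\vH\| \lesssim \ell^{d/2}e^{-c\ell}\|\nabla \vH\|$ and its $L^2$ companion from \cite{HenP13,maalqvist2014localization}, bounds $\sup_{\vC}|F_h^n(\vC)|/\|\vC\|$ by $\ell^{d/2}e^{-c\ell} + H^{-1}\ell^{d}e^{-2c\ell}$ using the $L^2$-boundedness of the discrete second time difference, and concludes via the stability argument. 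The proposal is correct and takes the same approach.
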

\begin{proof}
	Similar to the findings in Lemma~\ref{l:discerror}, the error $\tilde e_{n}^\ell = (1-\cor_h)(u_{h,n} - u^\ell_{h,n})$ solves 
	\begin{equation*}\label{eq:loddiscerr2} 
	\begin{aligned}
	&\Delta t^{-2}\left( \tilde e^\ell_{n+1} - 2  \tilde e^\ell_n +  \tilde e^\ell_{n-1},(1-\cor_h)\vH\right)+ a\left(\tilde e^\ell_n,(1-\cor_h)\vH\right)\\
	=\quad& (-f (n\Delta t),(\cor_h-\cor^\ell_h)\vH) + \Delta t^{-2}(\tilde u^\ell_{h,n+1} - 2 \tilde u^\ell_{h,n} + \tilde u^\ell_{h,n-1},(\cor_h-\cor^\ell_h)\vH) \\
	&+ \Delta t^{-2}((\cor_h-\cor^\ell_h)(u^\ell_{h,n+1} - 2 u^\ell_{h,n} + u^\ell_{h,n-1}),(1-\cor_h)\vH) \\
	&+a((\cor_h-\cor^\ell_h) u^\ell_{h,n},(\cor_h-\cor^\ell_h)\vH) \eqqcolon F_h^n((1-\cor_h)\vH)
	\end{aligned}
	\end{equation*}
	for all $\vH \in \VH$. As above, we want to show that ${F_h^n\vert}_{(1-\cor_h)\VH} \in L^2(\Omega)$. In \cite{HenP13,maalqvist2014localization}, it is shown that for any $\vH \in \VH$
	\begin{equation*}\label{e:diffcor3}
	\|\nabla(\cor_h-\cor^\ell_h)\vH\| \leq C \ell^{d/2} e^{-c\ell} \|\nabla \vH\|
	\end{equation*}
	and thus also
	\begin{equation*}\label{e:diffcor4}
	\|(\cor_h-\cor^\ell_h)\vH\| \leq C \ell^{d/2} e^{-c\ell} C_{I_H} H \|\nabla v_H\|,
	\end{equation*}
	see also \cite{KY16,KPY17} for an alternative constructive proof. Similar to the estimates in Section~\ref{ss:fsdisc}, we obtain
	\begin{equation*}
	\begin{aligned}
	\sup_{\vC \in (1-\cor_h)\VH}\frac{|F_h^n(\vC)|}{\|\vC\|} &\leq \quad\Bigg( CC^2_{I_H}C_\mathrm{inv}\Big(\|f(n\Delta t)\| + 2\left\|\frac{\tilde u^\ell_{h,n+1} - 2\tilde u^\ell_{h,n} + \tilde u^\ell_{h,n-1}}{\Delta t^2}\right\|\Big) \\
	&\quad + \,\,\beta C^2C_{I_H}C_\mathrm{inv}H^{-1}\|\nabla \tilde u^\ell_{h,n}\|\,\ell^{d/2}e^{-c\ell} \Bigg) \ell^{d/2}e^{-c\ell}\\
	&\lesssim \quad\, \ell^{d/2}e^{-c\ell} + H^{-1}\ell^{d}e^{-2c\ell}
	\end{aligned}
	\end{equation*}
	and finally
	\begin{equation*}\label{e:totaldiscerror2}
	\left\|\frac{\tilde u_{h,n} - \tilde u_{h,n-1}}{\Delta t} - \frac{\tilde u^\ell_{h,n} - \tilde u^\ell_{h,n-1}}{\Delta t}\right\| + \|\nabla(\tilde u_{h,n} - \tilde u^\ell_{h,n})\| \lesssim_T \ell^{d/2}e^{-c\ell} + H^{-1}\ell^{d}e^{-2c\ell}.
	\end{equation*}
\end{proof}

\subsection{Error of the practical method}\label{ss:errfullydisc}
We can now formulate the following theorem using Lemma~\ref{l:discerror}, Lemma~\ref{l:locerror} and the triangle inequality.
\begin{theorem}[Error of the practical method]\label{t:practerror}
	Let $u$ be the solution of \eqref{eq:weak} and assume that $\mathrm{(A1)}$-$\mathrm{(A4)}$ and $f \in H^{3}(0,T;L^2(\Omega))$ hold and $\Delta t \lesssim H$ subject to the CFL condition \eqref{eq:cfl}. Further, let $\mathbf{\tilde{u}^\ell_{H,h}}$ be the solution of the practical method and $\tilde u^\ell_{H,h}(t)$ the piecewise linear function that interpolates $\mathbf{\tilde{u}^\ell_{H,h}}$ in time. Then 
	\begin{equation}\label{eq:errfull1}
	\begin{aligned}
	&\|u - \tilde u^\ell_{H,h}\|_{L^2(0,T;H^1_{\Gamma}(\Omega))} \\\lesssim_T\,\,& H + \Delta t^2 + d_{\VC}[V_h] + H^{-1}(d_{\VC}[V_h])^2 + \ell^{d/2}e^{-c\ell} + H^{-1}\ell^{d}e^{-2c\ell}.
	\end{aligned}
	\end{equation}
	If $\ell \gtrsim |\log H|$, \eqref{eq:errfull1} simplifies to
	\begin{equation*}\label{eq:errfull2}
	\|u - \tilde u^\ell_{H,h}\|_{L^2(0,T;H^1_{\Gamma}(\Omega))} \lesssim_T H + \Delta t^2 + d_{\VC}[V_h] + H^{-1}(d_{\VC}[V_h])^2.
	\end{equation*}
\end{theorem}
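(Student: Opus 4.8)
The plan is to insert the two intermediate discrete solutions that have already been analysed --- the idealized solution $\tilde u_H$ controlled by Corollary~\ref{c:error} and the fine-scale discretized solution $\tilde u_{H,h}$ controlled by Lemma~\ref{l:discerror} --- between $u$ and the practical solution $\tilde u^\ell_{H,h}$, and then to apply the triangle inequality in $L^2(0,T;H^1_\Gamma(\Omega))$:
\begin{equation*}
\|u - \tilde u^\ell_{H,h}\|_{L^2(0,T;H^1_\Gamma)} \leq \|u - \tilde u_H\|_{L^2(0,T;H^1_\Gamma)} + \|\tilde u_H - \tilde u_{H,h}\|_{L^2(0,T;H^1_\Gamma)} + \|\tilde u_{H,h} - \tilde u^\ell_{H,h}\|_{L^2(0,T;H^1_\Gamma)}.
\end{equation*}
Each of the three terms is then estimated by a result already available under the present hypotheses (A1)--(A4), $f\in H^3(0,T;L^2(\Omega))$, $\Delta t\lesssim H$ and the CFL condition~\eqref{eq:cfl}.

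For the first term I would invoke Corollary~\ref{c:error} directly, which yields $\|u - \tilde u_H\|_{L^2(0,T;H^1_\Gamma)} \lesssim_T H + \Delta t^2$; the regularity needed by that corollary is exactly what Lemma~\ref{l:regts} guarantees from (A1)--(A4). For the second and third terms the relevant bounds are supplied by Lemma~\ref{l:discerror} and Lemma~\ref{l:locerror}, but those lemmas are stated per time step, i.e.\ they control $\|\nabla(\tilde u_n - \tilde u_{h,n})\|$ and $\|\nabla(\tilde u_{h,n} - \tilde u^\ell_{h,n})\|$ uniformly in $n$ (together with the corresponding discrete-velocity differences). To pass to the continuous-in-time norm I would use that $\tilde u_H$, $\tilde u_{H,h}$ and $\tilde u^\ell_{H,h}$ are the piecewise linear time interpolants of the respective nodal sequences, so that on each subinterval $[t_n,t_{n+1}]$ the gradient of a difference is a convex combination of its two nodal values; hence its $L^2(0,T;H^1_\Gamma)$ norm is bounded by $\sqrt T$ times the uniform-in-$n$ nodal bound. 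This gives $\|\tilde u_H - \tilde u_{H,h}\|_{L^2(0,T;H^1_\Gamma)} \lesssim_T d_{\VC}[V_h] + H^{-1}(d_{\VC}[V_h])^2$ and $\|\tilde u_{H,h} - \tilde u^\ell_{H,h}\|_{L^2(0,T;H^1_\Gamma)} \lesssim_T \ell^{d/2}e^{-c\ell} + H^{-1}\ell^{d}e^{-2c\ell}$, and summing the three contributions produces~\eqref{eq:errfull1}.

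Finally, for the simplified estimate I would fix the constant in $\ell \gtrsim |\log H|$ large enough that $e^{-c\ell} \lesssim H$ and $e^{-2c\ell} \lesssim H^2$; the algebraic prefactors $\ell^{d/2}$ and $\ell^{d}$ are of order $|\log H|^{d/2}$ and $|\log H|^{d}$ and are absorbed by slightly enlarging this constant. Then $\ell^{d/2}e^{-c\ell} \lesssim H$ and $H^{-1}\ell^{d}e^{-2c\ell} \lesssim H^{-1}H^2 = H$, so both localization contributions collapse into the $H$ term. The main point to be careful about is not any single estimate --- the substantive work is already done in Lemmas~\ref{l:discerror} and~\ref{l:locerror}, where the stability estimate~\eqref{eq:stability} of Lemma~\ref{l:stab} is applied to the respective error equations --- but rather the bookkeeping in converting the per-step, uniform-in-$n$ bounds into the $L^2(0,T;H^1_\Gamma)$ norm while verifying that the accumulated constants retain only the stated linear-in-$T$ dependence and do not reintroduce hidden powers of $H^{-1}$ or $\Delta t^{-1}$.
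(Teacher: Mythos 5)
Your proposal is correct and follows essentially the same route as the paper, which proves the theorem exactly by combining Corollary~\ref{c:error}, Lemma~\ref{l:discerror} and Lemma~\ref{l:locerror} via the triangle inequality. In fact you supply more detail than the paper does (the passage from per-step nodal bounds to the $L^2(0,T;H^1_\Gamma(\Omega))$ norm of the piecewise linear interpolants, and the absorption of the $\ell^{d/2}$, $\ell^d$ factors when $\ell \gtrsim |\log H|$), and these details are handled correctly.
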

\noindent Theorem~\ref{t:practerror} shows that in order to obtain a reasonable error of order $H$, the error introduced by the discretization of the corrector problems \eqref{e:corprob} and thus
the approximation error $d_{\VC}[V_h]$ need to be of order $H$ as well. The following lemma quantifies the approximation error $d_{\VC}[V_h]$ under additional regularity assumptions on the coefficient $A$.
\begin{lemma}\label{l:approxerr}
	Suppose that $A \in W^{1,\infty}(\Omega;\R)$ with oscillations on the scale $\eps$. Further, let $I_h\colon V \to V_h$ be a quasi-interpolation operator that fulfills properties similar to \eqref{e:IHapproxstab} and \eqref{e:IHL2stab} with $h$ instead of $H$. Then, 
	\begin{equation*}
	d_{\VC}[V_h] \lesssim h(H^{-1} + \eps^{-1}).
	\end{equation*}
\end{lemma}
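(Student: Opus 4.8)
The plan is to exploit the fact that functions in $\VC$ are far more regular than generic $H^1$ functions: I will show that every $v\in\VC$ lies in $H^2(\Omega)$ with $\|D^2 v\|\lesssim(H^{-1}+\eps^{-1})\|\nabla v\|$, and then estimate the best approximation $\inf_{\vh\in\Vh}\|\nabla(v-\vh)\|$ by the $H^2$-approximation error of the fine-scale quasi-interpolation $I_h$. Since $d_{\VC}[V_h]$ is the supremum of $\inf_{\vh\in\Vh}\|\nabla(v-\vh)\|/\|\nabla v\|$ over $v\in\VC$, this immediately yields the asserted bound.

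First I would identify the partial differential equation solved by $v\in\VC$. Since $\VC$ is $a$-orthogonal to $W=\Ker I_H$, the continuous functional $\phi\mapsto a(v,\phi)$ annihilates $\Ker I_H$ and hence factors through $I_H$: there exists $\lambda\in\VH$ with $a(v,\phi)=(\lambda,I_H\phi)=(I_H^*\lambda,\phi)$ for all $\phi\in V$. This is precisely the weak form of $-\ddiv A\nabla v=I_H^*\lambda$ with the mixed boundary conditions inherited from $V$, so that $\ddiv A\nabla v\in L^2(\Omega)$. To size the right-hand side I would test with $\phi=\lambda\in\VH\subset V$ (legitimate, and $I_H\lambda=\lambda$ by projectivity): then $\|\lambda\|^2=a(v,\lambda)\le\beta\|\nabla v\|\,\|\nabla\lambda\|\le\beta C_{\mathrm{inv}}H^{-1}\|\nabla v\|\,\|\lambda\|$ by the inverse inequality \eqref{eq:invineq1}, so $\|\lambda\|\lesssim H^{-1}\|\nabla v\|$; together with the $L^2$-stability \eqref{e:IHL2stab} of $I_H$ (hence of $I_H^*$) this gives $\|\ddiv A\nabla v\|\lesssim H^{-1}\|\nabla v\|$.

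Next I would convert this into the $H^2$ bound, using crucially that $A$ is scalar ($A\in W^{1,\infty}(\Omega;\R)$). Expanding $\ddiv A\nabla v=A\,\Delta v+\nabla A\cdot\nabla v$ almost everywhere and using uniform ellipticity together with $\|\nabla A\|_{L^\infty}\lesssim\eps^{-1}$ (oscillations on scale $\eps$), I obtain $\|\Delta v\|\le\alpha^{-1}\bigl(\|\ddiv A\nabla v\|+\|\nabla A\|_{L^\infty}\|\nabla v\|\bigr)\lesssim(H^{-1}+\eps^{-1})\|\nabla v\|$. On the convex domain $\Omega$ the Laplacian controls the full Hessian, $\|D^2 v\|\lesssim\|\Delta v\|$, with a constant independent of $\eps$, whence $\|D^2 v\|\lesssim(H^{-1}+\eps^{-1})\|\nabla v\|$. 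Choosing $\vh=I_h v\in\Vh$ and invoking the second-order approximation estimate $\|\nabla(v-I_h v)\|\lesssim h\|D^2 v\|$, valid on $H^2$ functions for Scott–Zhang type quasi-interpolants, I conclude $\inf_{\vh\in\Vh}\|\nabla(v-\vh)\|\lesssim h(H^{-1}+\eps^{-1})\|\nabla v\|$; dividing by $\|\nabla v\|$ and taking the supremum over $v\in\VC$ proves the claim.

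The step I expect to be the main obstacle is the identification and sizing of the multiplier $\lambda$, which is what turns the abstract $a$-orthogonality defining $\VC$ into the quantitative estimate $\|\ddiv A\nabla v\|\lesssim H^{-1}\|\nabla v\|$. The reduction to the Laplacian then makes the regularity step genuinely $\eps$-robust, precisely because the scalar structure removes any dependence on $\nabla A$ beyond the explicit lower-order term (a matrix coefficient would instead force a Calderón–Zygmund estimate whose constant need not be uniform in $\eps$). The only remaining technical caveat is the validity of $\|D^2 v\|\lesssim\|\Delta v\|$ under the mixed Dirichlet–Neumann conditions on the convex polygon, which I would dispatch via the standing convexity assumption used throughout the paper.
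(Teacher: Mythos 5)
Your proposal is correct and follows essentially the same route as the paper: bound $\inf_{\vh\in\Vh}\|\nabla(v-\vh)\|$ by $h\|D^2 v\|$ via $I_h$, control the Hessian by the Laplacian using convexity, use the scalar product rule $\ddiv A\nabla v = A\Delta v + \nabla A\cdot\nabla v$ with $\|\nabla A\|_{L^\infty}\lesssim\eps^{-1}$, and establish $\|\ddiv A\nabla v\|\lesssim H^{-1}\|\nabla v\|$ from the $a$-orthogonality of $\VC$ to $\Ker I_H$ combined with the inverse inequality \eqref{eq:invineq1} and the $L^2$-stability \eqref{e:IHL2stab}. The only (cosmetic) difference is that you realize the key bound \eqref{eq:divA} through a Riesz representer $\lambda\in\VH$ and the adjoint $I_H^*$, whereas the paper tests directly with $v\in C_c^\infty(\Omega)$, uses $a(\vC,v)=a(\vC,I_H v)$ and concludes by density of $C_c^\infty$ in $L^2$ --- both yield the identical constant $\beta C_{\mathrm{inv}}C_{I_H}H^{-1}$.
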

\begin{proof}
	For any $\vC \in \VC$, we have
	\begin{equation*}
	\begin{aligned}
	\inf_{\vh \in \Vh}\|\nabla(\vC - \vh)\| &\leq \|\nabla(1 - I_h)\vC\| \leq C_{I_h} h \|D^2 \vC\| \\&\leq C_{I_h} h \|\Delta \vC\| \leq \frac{h}{\alpha} C_{I_h} \|A \Delta \vC\| \\
	&\leq \frac{h}{\alpha} C_{I_h}\left( \|\ddiv A \nabla \vC\| + \|A\|_{W^{1,\infty}(\Omega;\R)}\|\nabla \vC\|\right)\\
	&\leq \frac{h}{\alpha} C_{I_h}\left(\beta C_{\text{inv}}C_{I_H}H^{-1} + c\eps^{-1}\right) \|\nabla\vC\|,
	\end{aligned}
	\end{equation*}
	using the fact that $\|A\|_{W^{1,\infty}(\Omega;\R)} \lesssim c\eps^{-1}$ and 
	\begin{equation}\label{eq:divA}
	\|\ddiv A \nabla \vC\| \leq \beta C_{\text{inv}}C_{I_H}H^{-1}\|\nabla \vC\|. 
	\end{equation}
	To show this, let $v \in C_c^\infty(\Omega)$ and observe that
		\begin{equation*}
		\begin{aligned}
		\frac{|(\ddiv A \nabla \vC,v)|}{\|v\|} &= \frac{|a(\vC,v)|}{\|v\|} = \frac{|a(\vC,I_H v)|}{\|v\|} \leq \frac{\beta \|\nabla \vC\|\|\nabla I_H v\|}{\|v\|} \\&\leq  \beta C_\mathrm{inv} C_{I_H} H^{-1} \|\nabla \vC\|
		\end{aligned}
		\end{equation*}
		employing the estimates \eqref{e:IHL2stab} and \eqref{eq:invineq1}. The inequality \eqref{eq:divA} then follows by the density of $C_c^\infty$ in $L^2$. Therefore, $d_{\VC}[V_h]$ can be bounded by
	\begin{equation*}
	d_{\VC}[V_h] \lesssim h(H^{-1} + \eps^{-1}).
	\end{equation*}
\end{proof}
\noindent Using Theorem~\ref{t:practerror} and Lemma~\ref{l:approxerr}, we obtain the following result. 
\begin{corollary}[Error of the practical method]\label{c:practerror}
	Assume that $\mathrm{(A1)}$-$\mathrm{(A4)}$ and $f \in H^{3}(0,T;L^2(\Omega))$ hold. Assume further that $A \in W^{1,\infty}(\Omega;\R)$, $\Delta t \lesssim H$ subject to the CFL condition \eqref{eq:cfl}, $\ell \gtrsim  |\log H|$ and $h \lesssim H\eps$. Then 
	\begin{equation*}\label{eq:errfull3}
	\|u - \tilde u^\ell_{H,h}\|_{L^2(0,T;H^1_{\Gamma}(\Omega))} \lesssim_T H + \Delta t^2.
	\end{equation*}
\end{corollary}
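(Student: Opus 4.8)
The plan is to combine the error decomposition already furnished by Theorem~\ref{t:practerror} with the approximation bound of Lemma~\ref{l:approxerr}, so that the proof reduces to a scaling computation verifying that every additional error contribution is of order $H$. First I would invoke Theorem~\ref{t:practerror}: since the assumptions $\mathrm{(A1)}$--$\mathrm{(A4)}$, $f \in H^{3}(0,T;L^2(\Omega))$, the CFL condition \eqref{eq:cfl} with $\Delta t \lesssim H$, and $\ell \gtrsim |\log H|$ all hold, the theorem directly yields
\[
\|u - \tilde u^\ell_{H,h}\|_{L^2(0,T;H^1_{\Gamma}(\Omega))} \lesssim_T H + \Delta t^2 + d_{\VC}[V_h] + H^{-1}(d_{\VC}[V_h])^2.
\]
It therefore suffices to show that the two terms involving $d_{\VC}[V_h]$ are each $\lesssim H$.

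Next I would apply Lemma~\ref{l:approxerr}, which is available since $A \in W^{1,\infty}(\Omega;\R)$ oscillates on the scale $\eps$, to obtain $d_{\VC}[V_h] \lesssim h(H^{-1} + \eps^{-1})$. The decisive step is inserting the hypothesis $h \lesssim H\eps$: this gives $hH^{-1} \lesssim \eps$ and $h\eps^{-1} \lesssim H$, hence $d_{\VC}[V_h] \lesssim \eps + H$. Recalling the standing assumption $H > \eps$ on the coarse mesh, this collapses to $d_{\VC}[V_h] \lesssim H$, and consequently $H^{-1}(d_{\VC}[V_h])^2 \lesssim H^{-1}H^2 = H$ as well.

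Substituting both bounds back into the estimate from Theorem~\ref{t:practerror} leaves precisely $\lesssim_T H + \Delta t^2$, which is the claim. I expect no genuine obstacle at this stage: the analytical difficulty has been front-loaded entirely into the localization, fine-scale discretization and regularity arguments of the preceding results, and what remains is the bookkeeping observation that the coupling $h \lesssim H\eps$ is exactly the scaling that balances the two competing contributions $hH^{-1}$ and $h\eps^{-1}$ of Lemma~\ref{l:approxerr} against the target order $H$.
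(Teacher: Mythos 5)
Your proposal is correct and coincides with the paper's own (one-line) proof: the corollary is obtained exactly by combining Theorem~\ref{t:practerror} with Lemma~\ref{l:approxerr} and inserting $h \lesssim H\eps$ together with $H > \eps$ to get $d_{\VC}[V_h] \lesssim H$ and hence $H^{-1}(d_{\VC}[V_h])^2 \lesssim H$. The scaling bookkeeping you spell out is precisely the intended argument.
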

\noindent While orders of convergence in space and time appear imbalanced when the error is measured in $L^2(0,T;H^1_{\Gamma}(\Omega))$, quadratic convergence is empirically observed for the  $L^2(0,T;L^2(\Omega))$ norm. In this sense, the error estimates of our explicit method are competitive with the fully implicit Crank-Nicolson approach of \cite{abdulle2017localized} provided that the fine scale discretization errors of \cite{abdulle2017localized} can be bounded by $(h/\eps)^2$.
\begin{remark}
	The assumptions on the fine mesh size $h$ in Corollary~\ref{c:practerror} are in line with the theoretical findings for the well studied elliptic case. Also note that the above construction is not limited to approximation spaces based on $P_1$/$Q_1$ finite elements. 
		In principle, there is no restriction to devise a higher-order variant of the method in space and to combine it with any time stepping approach. 
		However, higher order convergence rates with respect to $H$ can only be achieved if the interpolation operator $I_H$ fulfills additional orthogonality properties and the coefficient $A$ is regular enough. Further, it is important to adjust the number of element layers $\ell$ for the localization accordingly.
\end{remark}

\section{Numerical Results}\label{s:numres}
In this section, we want to present two numerical experiments to illustrate the theoretical results from the previous sections. The computations are done using an adaption of the code from \cite{Hel17}. The error of the method is measured in the discrete $L^2(0,T;H^1_{\Gamma}(\Omega))$ norm
\begin{equation*}
\| v \|^2_{\Omega,N} := \sum_{i = 1}^N \Delta t \,\|v(i \Delta t)\|^2_{H^1_{\Gamma}(\Omega)}
\end{equation*}
where $N = \lceil T/\Delta t\rceil$ denotes the number of time steps. In both numerical examples, the domain is set to $\Omega = (0,1)^2$ and the final time is chosen as $T = 1$. The reference solution is computed using standard finite elements paired with a leapfrog scheme in time on a uniform quadrilateral mesh $\mathcal{T}_h$ with mesh size $h = \sqrt{2}\cdot 2^{-8}$ which is also the mesh parameter for the computations of the corrector problems.
The fine time step size is chosen small enough subject to the standard CFL condition, i.e., $\Delta t_\mathrm{fine} \leq C_\mathrm{CFL} h$, where $C_\mathrm{CFL} = \sqrt{2}\beta^{-1/2} C_\mathrm{inv}^{-1}$. This condition can be shown similarly to \eqref{eq:cfl} and is slightly relaxed compared to \eqref{eq:cfl} since $C_{I_H} \geq 1$ in general. Practical experiments showed that $C_\mathrm{CFL} = \sqrt{2}\beta^{-1/2}0.14$ is a sufficient and rather sharp choice for the stability of both the standard finite element solution and the coarse solution computed by the method stated above on a quadrilateral mesh $\tri_H$ with mesh size $H$. In the following experiments, we set $\Delta t = C_\mathrm{CFL} H$. Note that given $u^\ell_{h,0}$ and approximations $v^\ell_{h,0}$ of $v_0$ and $f^\ell_{h,0}$ of $f(0)$, $u^\ell_{h,1}$ is computed using the second-order Taylor polynomial, i.e.,
\begin{equation*}
(u^\ell_{h,1},\vC) = (u^\ell_{h,0},\vC) + \Delta t \,(v^\ell_{h,0},\vC) - \frac{1}{2} \Delta t^2 \,a(u^\ell_{h,0},\vC) + \frac{1}{2}\Delta t^2\, (f^\ell_{h,0},\vC)
\end{equation*} 
for any $\vC \in (1-\cor^\ell_h)\VH$. This choice is crucial in order to get the optimal convergence rate.

\subsection{Example 1}\label{ss:ex1}
For the first example, we take the setting from \cite[Sec. 6.2]{abdulle2017localized}, i.e., $f = 1$, $u_0 = v_0 = 0$, $\Gamma =\partial \Omega$ and $A$ as depicted in Figure~\ref{fig:ex1} (left), with $\alpha = 0.04$, $\beta = 1.96$ and $\eps = 0.006$. A detailed  formula for the coefficient can be found in \cite[Sec. 6.2]{abdulle2017localized}. 
The parameter $\ell$ is chosen as $\ell = 2$ for all values of $H$. The remaining discretization parameters are defined above. The errors of the practical method are shown in Figure~\ref{fig:ex1} (right). The red curve shows the errors of the standard method defined in \eqref{eq:practmethod} and the blue curve displays the errors of the method based on \eqref{eq:lodFEM} which uses the classical finite element mass matrix. Both curves show the expected linear convergence and are very close which seems to justify the theoretical observation that the mass matrices may be exchanged. 
\begin{figure}[h]
	\begin{center}
		\scalebox{.98}{
			\includegraphics[width=0.51\textwidth]{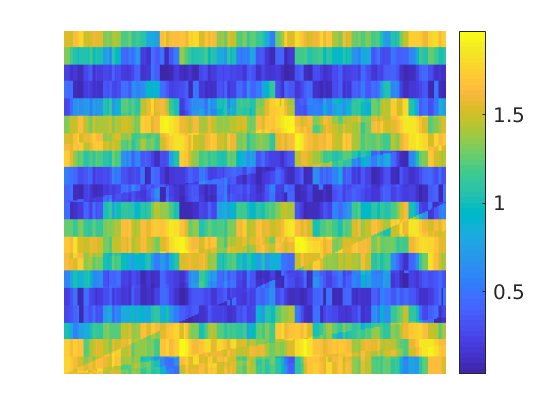}
		}
		\scalebox{0.7}{
			\begin{tikzpicture}
			
			\begin{axis}[%
			width=2.7in,
			height=2.15in,
			at={(0.952in,0.761in)}, 
			scale only axis,
			xmode=log,
			xmin=0.01,
			xmax=1,
			xminorticks=true,
			xlabel={\LARGE mesh size $H$},
			ymode=log,
			ymin=0.001,
			ymax=1,
			yminorticks=true,
			ylabel={\LARGE error in $\|\cdot\|_{\Omega,N}$},
			axis background/.style={fill=white},
			legend style={legend cell align=left, align=left, draw=white!15!black},
			legend pos=north west
			]
			\addplot [color=red, line width=1.9pt, mark size=5.5pt, mark=x]
			table[row sep=crcr]{%
				0.707106781186548	0.14512640958\\
				0.353553390593274	0.0601048660277\\
				0.176776695296637	0.0253425385818\\
				0.0883883476483184	0.0117213930874\\
				0.0441941738241592	0.00550450558784\\
				0.0220970869120796	0.00288351097216\\
				0.0110485434560398	0.0016988927048\\
			};
			\addlegendentry{standard method}
			
			\addplot [color=blue, line width=1.5pt, mark size=3.0pt, mark=o]
			table[row sep=crcr]{%
				0.707106781186548	0.144498959523\\
				0.353553390593274	0.0595028945195\\
				0.176776695296637	0.0256220133354\\
				0.0883883476483184	0.0117408374925\\
				0.0441941738241592	0.00547149515418\\
				0.0220970869120796	0.00292847216739\\
				0.0110485434560398	0.00181426137372\\
			};
			\addlegendentry{simplified method}
			
			\addplot [color=black, line width=0.9pt, dashed]
			table[row sep=crcr]{%
				0.8	0.3\\
				0.012	0.0045\\
			};
			\addlegendentry{order 1}
			
			\end{axis}
			\end{tikzpicture}%
		}
	\end{center}
	\caption{Coefficient $A$ (left) and errors (right) for example 1.}
	\label{fig:ex1}
\end{figure}

\subsection{Example 2}\label{ss:ex2}
In the second example, we choose ${\Gamma = \{ x \in \partial \Omega\colon x_1 = 0\}}$, $f(x,t) = \sin(4\pi x_1)(1-t)$ and $v_0 = 0$. We further let $u_0 \in H^1_\Gamma$ be the solution of
\begin{equation*}
a(u_0,v) = \int_\Omega 5\sin(\pi x_1)\sin(\pi x_2) v \dx
\end{equation*}
for all $v \in H^1_\Gamma$. The coefficient $A$ is shown in Figure~\ref{fig:ex2} (left), where $\alpha = 1$, $\beta = 17.78$ with $\eps = 0.02$. The precise formula for $A$ is
	\begin{equation*}
	\begin{aligned}
	A(x) = 1.9&\cdot\left(\lfloor 2x_1\rfloor\lfloor 8 (1-x_1)\rfloor + \lfloor 2(1-x_1)\rfloor\lfloor 8x_1\rfloor\right)\\&\cdot\left(\lfloor 2x_2\rfloor\lfloor 8 (1-x_2)\rfloor + \lfloor 2(1-x_2)\rfloor\lfloor 8x_2\rfloor\right)\\&\cdot\sin(\lfloor 32x_1 \rfloor)^2 \, \sin(\lfloor 64x_2\rfloor)^2+1.
	\end{aligned}
	\end{equation*}
	The other discretization parameters are chosen as defined above. The red curve in Figure~\ref{fig:ex2} (right) shows the errors of the standard method \eqref{eq:practmethod} with $\ell = 2$ and the green curve shows the errors of the standard method with $\ell = 4$. It can be seen that the red curve stagnates for smaller values of $H$ which is in accordance with the theoretical observations that $\ell$ needs to be  chosen proportional to $|\log(H)|$. The convergence rate is again in line with the theoretical results and seems to be even slightly better for $\ell = 4$ at around $1.5$. Note that, as in the first example, the errors of the simplified method based on \eqref{eq:lodFEM} are very close to the errors of the standard method but are not depicted for better visibility. Also, since the value $\beta$ is only taken in a small part of the domain, the CFL condition can be slightly relaxed for this example. 
\begin{figure}[h]
	\begin{center}
		\scalebox{.98}{
			\includegraphics[width=0.51\textwidth]{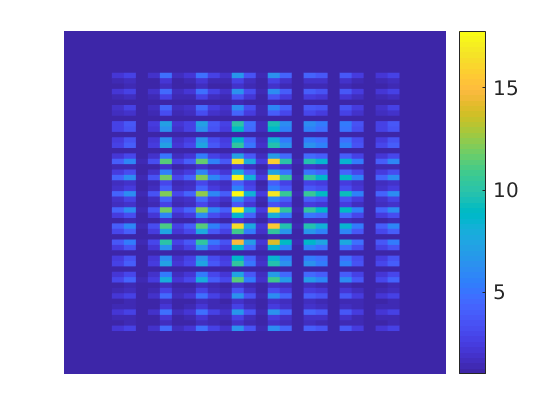}
		}
		\scalebox{0.7}{
			\begin{tikzpicture}
			
			\begin{axis}[%
			width=2.7in,
			height=2.15in,
			at={(0.952in,0.761in)},
			scale only axis,
			xmode=log,
			xmin=0.01,
			xmax=1,
			xminorticks=true,
			xlabel={\LARGE mesh size $H$},
			ymode=log,
			ymin=0.0001,
			ymax=1,
			yminorticks=true,
			ylabel={\LARGE error in $\|\cdot\|_{\Omega,N}$},
			axis background/.style={fill=white},
			legend style={legend cell align=left, align=left, draw=white!15!black},
			legend pos= north west
			]
			\addplot [color=red, line width=1.5pt, mark size=5.5pt, mark=x]
			table[row sep=crcr]{%
				0.707106781186548	0.115929465414\\
				0.353553390593274	0.0692045390136\\
				0.176776695296637	0.0198358436703\\
				0.0883883476483184	0.00585029351279\\
				0.0441941738241592	0.00261857123181\\
				0.0220970869120796	0.0019304494013\\
				0.0110485434560398	0.00100341294064\\
			};
			\addlegendentry{standard method, $\ell=2$}
			
			\addplot [color=green, line width=1.5pt, mark size=4.0pt, mark=diamond]
			table[row sep=crcr]{%
				0.707106781186548	0.115929465414\\
				0.353553390593274	0.0687406999219\\
				0.176776695296637	0.0198420421635\\
				0.0883883476483184	0.00496611083453\\
				0.0441941738241592	0.00138056267706\\
				0.0220970869120796	0.000423052834832\\
				0.0110485434560398	0.000115699586959\\
			};
			\addlegendentry{standard method, $\ell=4$}
			
			\addplot [color=black, line width=0.9pt, dashed]
			table[row sep=crcr]{%
				0.8	0.05\\
				0.012	0.00075\\
			};
			\addlegendentry{order 1}
			
			\end{axis}
			\end{tikzpicture}%
		}
	\end{center}
	\caption{Coefficient $A$ (left) and errors (right) for example 2.}
	\label{fig:ex2}
\end{figure}

\section{Conclusions}\label{s:concl}
In this work, we have discussed a discretization of the wave equation with rough coefficients. We have used the LOD method in space and the explicit leapfrog scheme for the time discretization and are able to obtain first-order convergence under suitable assumptions on the initial data and subject to a relaxed version of the CFL condition. Numerical experiments illustrate the 
theoretical findings.

Ongoing research aims at further weakening the presented assumptions on the initial data especially in the context of $L^2(L^2)$ error estimates and the generalization to elastic and poroelastic waves based on preparatory work \cite{BG16,ACMPP18}. Additionally, the long-time behavior of numerical solutions to the wave equation will be considered.

\bibliographystyle{plain}

\end{document}